\documentclass[10pt]{article}
\usepackage[utf8]{inputenc}

\usepackage[left=1.4in,top=1.3in,right=1.3in,bottom=1.3in,footskip = 0.333in]{geometry}

\usepackage{authblk, amsmath, amsthm, amsfonts, amssymb, amsbsy, bbm, bigstrut, graphicx, enumerate,  upref, longtable, comment, comment,siunitx,mathrsfs,bm}
\usepackage{float}
\usepackage{tabularx, booktabs, makecell, caption}
\usepackage{siunitx}
\usepackage{algorithm}
\usepackage[dvipsnames]{xcolor}

\usepackage{subcaption}
\usepackage[breaklinks]{hyperref}

\usepackage[numbers, sort&compress]{natbib} 
\usepackage{comment}
\usepackage[export]{adjustbox}

\newtheorem{theorem}{Theorem}
\newtheorem{lemma}{Lemma}

\newtheorem{prop}{Proposition}

\theoremstyle{definition}

\newtheorem{ass}{Assumption}

\numberwithin{theorem}{section}
\numberwithin{remark}{section}
\numberwithin{ex}{section}
\numberwithin{ass}{section}
\numberwithin{defn}{section}
\numberwithin{lemma}{section}
\numberwithin{corollary}{section}
\numberwithin{prop}{section}
\numberwithin{conjecture}{section}

\newcommand{\argmin}{\operatorname{argmin}}

\newcommand{\bG}{\mathbf{G}}

\newcommand{\bc}{\mathbf{c}}
\newcommand{\bA}{\mathbf{A}}
\newcommand{\by}{\mathbf{y}}

{\tiny {\tiny }}

\newcommand{\bct}{\widehat{\bc^\top \btheta}}

\newcommand{\rr}{\mathbb{R}}

\allowdisplaybreaks

\def \bb{\mathbf{b}}
\def \be{\mathbf{e}}
\def \bu{\mathbf{u}}
\def \bM{\mathbf{M}}
\def \bz{\mathbf{z}}
\def \bX{\mathbf{X}}
\def \be{\mathbf{e}}

\def \bu{\mathbf{u}}

\def \btheta {\boldsymbol{\theta}}
\def \bSigma {\boldsymbol{\Sigma}}

\makeatletter
\def\thickhline{%
	\noalign{\ifnum0=`}\fi\hrule \@height \thickarrayrulewidth \futurelet
	\reserved@a\@xthickhline}
\def\@xthickhline{\ifx\reserved@a\thickhline
	\vskip\doublerulesep
	\vskip-\thickarrayrulewidth
	\fi
	\ifnum0=`{\fi}}
\makeatother

\newlength{\thickarrayrulewidth}
\usepackage{algorithm}
\numberwithin{equation}{section}

\DeclareMathOperator{\sech}{sech}

\begin{document}
\title{Inference in high-dimensional logistic regression under tensor network dependence}

\date{}

\author[1]{Josh Miles\thanks{miles.j@ufl.edu}}
\author[1]{Sohom Bhattacharya\thanks{bhattacharya.s@ufl.edu}}
\affil[1]{Department of Statistics, University of Florida}
\makeatletter
\renewcommand\AB@affilsepx{\\ \protect\Affilfont}
\renewcommand\Authsep{, }
\renewcommand\Authand{, }
\renewcommand\Authands{, }
\makeatother

\maketitle

\begin{abstract}
   We investigate the problem of statistical inference for logistic regression with high-dimensional covariates in settings where dependence among individuals is induced by an underlying Markov random field. Going beyond  the pairwise interaction models such as the Ising model, we consider a framework to accommodate more general tensor structures that capture higher-order dependencies. We develop a two-step procedure for low-dimensional linear and quadratic functionals. The first step constructs a regularized maximum pseudolikelihood estimator, for which we establish consistency under high-dimensional features. However, as in other classical high-dimensional regression problems, this estimator is biased and cannot be directly used for valid statistical inference. The second step introduces a bias-correction that yields an asymptotically normal estimator from which one can construct confidence intervals and test hypotheses. Our results move beyond the existing literature, where only estimation guarantees were available or only for pairwise interaction models. We complement our theoretical analysis with simulation studies confirming the effectiveness of the proposed method.
\end{abstract}


\section{Introduction}

Regression with dependent observations has gained considerable attention in recent network analysis research~\citep{mukherjee2024logistic,daskalakis_2020}. Given a network, the primary objective in this framework is to efficiently estimate the regression coefficients while accounting for dependencies among observations. 
The problem is particularly relevant in scenarios where the standard independence assumption on the response variables is violated. Such dependencies commonly arise in settings where data is collected in temporal or spatial domains or within social networks, where interactions between individuals introduce correlation structures through peer effects~\citep{bertrand2000network,duflo2003role,sacerdote2001peer,trogdon2008peer}. While recent work has advanced understanding in such models, most results are restricted to low-dimensional covariates or focus only on consistent estimation. As a result, inference for high-dimensional regression under dependence remains poorly understood. This paper addresses this important gap by studying statistical inference for low-dimensional functionals in high-dimensional logistic regression with dependent binary outcomes, where dependence is induced by an underlying hypergraph.

To achieve this, we consider a discrete exponential family of binary outcomes, where individuals are connected via 
a Markov Random Field. 
A widely studied model considers quadratic interactions, which correspond to classical Ising model~\cite{ising1925beitrag}. Originally introduced in statistical physics to study ferromagnetism, the Ising model has since found broad applications in spatial statistics, social networks, computer vision, and neuroscience~\cite{banerjee2003hierarchical,geman1986markov,green2002hidden,montanari2010spread}. Further, the Ising model with external fields has been a popular framework for modeling logistic regression with peer effects~\cite{daskalakis2019regression,mukherjee2024logistic}. Related problems of consistent estimation and hypothesis testing from a single realization of an Ising model have been studied extensively in the past couple of decades~\cite{chatterjee2007estimation,bhattacharya2018, mukherjee2018global,mukherjee2022testing}. However, in many real-world datasets, pairwise interactions are insufficient to capture complex dependency structures; higher-order peer-group effects must also be considered. Motivated by this, we study hypothesis testing under general hypergraph-induced dependence.

Inference in high-dimensional linear and generalized linear models (GLMs) has been well- studied in the last decade. It is known~\cite{buhlmann2011statistics,van2014asymptotically} that while regularized estimators such as the Lasso achieve consistent estimation, they cannot be directly used for hypothesis testing. A substantial line of work has therefore focused on debiasing such estimators to obtain asymptotically normal statistics for inference in both linear regression~\cite{zhang2014confidence,javanmard2014hypothesis,javanmard2018debiasing} and GLMs~\cite{cai_debiased_clt,ma_2020,ma2024statistical}. Importantly, all of these results assume independent samples. In contrast, our work develops valid confidence intervals for low-dimensional functionals in high-dimensional logistic regression with tensor network dependence, thus broadening the applicability for a wide class of dependent data models.

Our contributions can be summarized as follows. We study the problem of statistical inference and hypothesis testing in high-dimensional logistic regression models with dependent observations induced by an underlying tensor network structure. We propose a bias-corrected estimator (Section~\ref{sec:lin_func}) based on the $\ell_1$-regularized maximum pseudo-likelihood estimator~\eqref{eq:define_theta_isl}, which is computationally efficient even under higher-order dependence. The proposed estimator is asymptotically normal and enables a valid construction of confidence intervals for both linear and quadratic functionals of the regression parameter. We establish non-asymptotic error bounds for the pseudo-likelihood estimator and provide rigorous justification of the inference procedure through Theorem~\ref{thm:main} and Proposition~\ref{prop:CI}. Our proof-techniques can be of independent interest for high-dimensional nonlinear statistical inference problems for network data. To the best of our knowledge, this is the first work to demonstrate the validity of projection-based inference~\eqref{eq:def-uhat} under general network and tensor dependence.

\subsection{Related works}
Our work lies at the intersection of two key research areas: regression analysis on network data and inference of low-dimensional functionals in high-dimensional statistical models.

Estimating model parameters from a single sample of binary outcomes observed on an underlying network has received growing attention.
For the classical Ising model~\cite{ising1925beitrag}, this problem has been extensively investigated, beginning with the foundational works of~\cite{comets1992consistency,gidas1988consistency,guyon1992asymptotic}, which established consistency and optimality of maximum likelihood estimators when the underlying network follows a spatial lattice structure.
Because the maximum likelihood estimator is computationally intractable for general interaction matrices,~\cite{besag1974spatial,besag1975statistical} introduced the maximum pseudo-likelihood estimator (MPLE), whose statistical properties have been rigorously analyzed for binary Markov random fields~\citep{chatterjee2007estimation,bhattacharya2018,ghosal2020joint}.
More recently,~\citep{daskalakis2019regression,daskalakis_2020,mukherjee2024logistic} considered logistic regression models on networks, where the dependence among observations arises through the network structure.
Extensions to tensor-type dependencies in binary data have been studied in~\citep{mukherjee2022estimation,liu2024tensor}.
Several related works address hypothesis testing and inference problems for the Ising model without covariates, or with low-dimensional covariates~\citep{berthet2019exact,bhattacharya2025nonsense,lowe2020exact,neykov2019property}.
To our knowledge, our paper is the first to address statistical inference in high-dimensional logistic regression under general higher-order (hypergraph) dependencies.

The question of consistent estimation in high-dimensional regression with independent observations has been extensively studied~\cite{bach2010self,huang2012estimation,negahban2012unified,plan2012robust,van2008high,meier2008group}.
These works primarily focus on the traditional high-dimensional regime with independent observations, where the number of covariates can exceed the sample size, and the true regression parameter is assumed to be sparse.
A major line of research has focused on bias-correction methods for regularized estimators, which enable valid statistical inference and hypothesis testing in both linear and logistic models~\citep{zhang2014confidence,belloni2016post,shi2021statistical,zhu2020high,ma_2020}.
Beyond coordinate-wise inference (e.g., testing a specific $\theta_j$), more recent studies have developed inference procedures for low-dimensional functionals of high-dimensional parameters~\citep{guo2021inference,guo2019optimal,ma2024statistical,guo2022moderate}.
However, all of these works rely on the assumption of independent observations.
Motivated by applications with peer-effects, our work departs from this framework by developing inference procedures that remain valid under general dependent data structures induced by a tensor network. Our paper is technically most closely related to~\cite{mukherjee2024logistic}, which studies logistic regression under network dependence; however, their analysis is limited to pairwise interactions and does not address the statistical inference problem.

\noindent \textbf{Organization:} The rest of the paper is structured as follows: we present our results in Section~\ref{sec:results}. In particular, Section~\ref{sec:lin_func} provides our main method for statistical inference for linear functional. Section~\ref{sec:testing} and Section~\ref{sec:quadratic} delineates how our construction can be used for hypothesis testing and statistical inference of quadratic functional. Theoretical guarantees of our proposed method is provided in Section~\ref{sec:theory}. Numerical experiments and discussions are provided in Section~\ref{sec:sims} and Section~\ref{sec:discussion} respectively. All proofs of theoretical results are deferred to the Appendix.

\noindent \textbf{Notation:} We denote by $[n]$ the set $\{1,\ldots, n\}$. Given a subset $S \subseteq [n]$ and a vector $\bz \in \mathbb R^n$, define $\bz_S= (z_i)_{i \in S}$ and $\bz_{-S}= (z_i)_{i \notin S}$. For $j \in [d]$, $\be_j \in \mathbf{R}^d$, denote the canonical basis of $\mathbb{R}^d$. For any vector $\bu \in \mathbb{R}^d$, $\|\bu\|_0,\|\bu\|_1,\|\bu\|_2,$ and $\|\bu\|_\infty$ denote its $\ell_0, \ell_1, \ell_2, \ell_\infty$ norms, respectively. Given a graph $\bG_n$ on $n$ vertices, $\bA(\bG_n)$ denotes its adjacency matrix. Given a $d\times d$ matrix $\mathbf{B}_n$, denote by $\lambda_{\max}(\mathbf{B}_n)$ and $\lambda_{\min}(\mathbf{B}_n)$ its largest and smallest eigenvalues, respectively. We denote by $\text{Tr}(\mathbf{B}_n)$ the trace of $\mathbf{B}_n$. $z_{\alpha}$ and $\Phi$ denote the $\alpha$-quantile and CDF of a standard normal random variable. For two sequences of real numbers $a_n, b_n>0, a_n=O(b_n)$ will denote that $\limsup_{n \rightarrow \infty} \ a_n / b_n = C$ for some $C \in [0,\infty)$, $a_n=\Omega(b_n)$ will denote that $\liminf_{n \rightarrow \infty} \ a_n / b_n = C$ for some $C \in (0,\infty)$, and $a_n=o(b_n)$ will denote that $\lim_{n \rightarrow \infty} a_n/b_n=0$. We write $a_n \gg b_n$ if $a_n/b_n \rightarrow \infty$ and $a_n \lesssim b_n$ if there exists $C>0$ such that $a_n \le C b_n$. For any two sequences of random variables $Z_n$ and $W_n$ with $W_n>0$, we write $Z_n= O_P(W_n)$ and $Z_n= o_P(W_n)$ if $Z_n/W_n$ is tight and $Z_n/W_n \xrightarrow{\mathbb{P}} 0$, respectively.


\section{Results}\label{sec:results}

Consider the canonical Markov Random Field (MRF) model induced by a hypergraph $\bG_n=([n],E)$. The conditional p.m.f.  
of the response $\by$ given the covariates is given by
\begin{align}\label{model}
  \mathbb{P}_{\theta}(\by|\bX) = \frac{1}{Z(\btheta)} \exp \left(\sum_{\be \in E} g_\be \by_\be + \sum_{i=1}^{n} y_i \bX^\top_i \btheta\right), \qquad \by \in \{ \pm 1\}^n,
\end{align}
for some $g_{\be} \ge 0$, $\by_\be= \prod_{i \in \be} y_i$, and $Z(\btheta)$ denote the normalizing constant
\begin{align}\label{eq:define_Z}
    Z(\btheta)= \sum_{\bz \in \{ \pm 1\}^n} \exp \left(\sum_{\be \in E} g_\be \bz_\be + \sum_{i=1}^{n} y_i \bX^\top_i \btheta\right)
\end{align}
Here 
the observed covariates are denoted by the matrix
\(\boldsymbol X^\top = (\bX_1, \ldots, \bX_n)  \in \mathbb R^{d \times n}\). We will assume throughout that the covariates $\bX_i$ are i.i.d. with common distribution $\mathbb{P}_X$.
Our parameter of interest is $\btheta \in \mathbb{R}^d$. We will consider the traditional high-dimensional scenario where $d \rightarrow \infty$ as $n \rightarrow \infty$ but the parameter is sparse. Given a single realization $(\by,\bX)$ of the model, the main objective of the paper is to perform inference on the unknown regression parameter $\btheta$.

\subsection{Inference for linear functionals}\label{sec:lin_func}
In this section, we describe our method of performing inference on a linear functional $\bc^\top \btheta$, for some $\bc \in \rr^d$. We will first obtain a consistent estimator of $\btheta$, and then perform a bias-correction to obtain an estimator $\widehat{\bc^\top \btheta}$ which is asymptotically normal. We describe the details of our two-step method below.

Estimation of parameters in MRFs is challenging due to the fact that the normalizing constant $Z(\btheta)$, defined by~\eqref{eq:define_Z} is computationally intractable. As a viable alternative, pseudo-likelihood based estimators~\cite{besag1975statistical, chatterjee2007estimation} have been quite popular for Ising models. However, the majority of prior works that deal with consistent estimation are restricted to pair-wise interaction. Moreover, inferential properties of pseudo-likelihood-based estimators are poorly understood.
To construct a consistent estimator $\tilde\btheta$ of $\btheta$, we proceed as follows.
Given a hypergraph $\bG_n$, we call a subset of vertices $I \subset [n]$ a strong independent set~\citep{cohen2022number} if, for every hyperedge $\be \in E$, we have $|I \cap \be| \le 1$.
Note that for a graph $\bG_n$, this definition coincides with that of a standard independent set.
We begin by choosing a strong-independent set $S:= S(\mathbf{G}_n)$ of the hypergraph $\mathbf{G}_n$. We will split vertices in $S$ into two parts $S= S_1 \cup S_2$ of equal size (for simplicity, we will assume throughout that $|S|$ is even, otherwise one of the parts will have one extra vertex). We will use the vertices $S_i$, $i \in \{1,2\}$ during the $i$-th step of our method. Note that, conditional on $\by_{-S},\bX$, the random variables $y_i$, $i \in S$ are independent by property of MRF. Our initial estimator maximizes the conditional probability mass function
$\prod_{i \in S_1} p(y_i| \by_{-S}, \bX)$ 
with an $\ell_1$ regularization:
\begin{equation}\label{eq:define_theta_isl}
    \tilde{\btheta}= \argmin_{\bb} \left\{L_{S_1}(\bb) +\lambda_n \|\bb\|_1\right\},
\end{equation}
for some regularization parameter $\lambda_n>0$. Here, the log-pseudo-likelihood $L_{S_1}(\bb)$ is defined by
\begin{align}\label{eq:define_LS1}
    L_{S_1}(\bb)= -\frac{1}{|S_1|} \sum_{i \in S_1} \left\{ y_i(m_i(\by)+  \bX^\top_i \bb) -\log\cosh(m_i(\by)+  \bX^\top_i \bb)\right\},
\end{align}
where 
\begin{equation}\label{eq:mi_define}
    m_i(\by) =\sum_{\be: i \in \be} g_{\be}\by_{\be\setminus i}.
\end{equation} 

\noindent 
We will show the consistency of this estimator $\tilde \btheta$ in Proposition~\ref{thm:first_step}. As commonly seen in high-dimensional statistics, one needs to de-bias the $\ell_1$ regularized estimator to perform precise statistical inference~\cite{javanmard2018debiasing,javanmard2014hypothesis,zhang2014confidence}. To describe our debiasing method based on $\tilde \btheta$, we first transform the response variables $\overline y_i:= (y_i+1)/2 \in \{0,1\}$. Define the function 
\begin{align}\label{eq:define_f}
    f(x)= e^x/ (e^x+ e^{-x}).
\end{align}
Further define the quantities
\begin{equation}\label{eq:define_vi}
    v_i = m_i(\by)+ \bX^\top_i \btheta, \quad \tilde v_i= m_i(\by)+  \bX^\top_i \tilde \btheta.
\end{equation}
The rationale behind introducing the notation is $\overline y_i$, $i \in S_2$ are independent Bernoulli random variables given $\by_{-S}, \bX$ with 
\begin{equation}\label{eq:y_bern}
    \mathbb{P}(\overline y_i =1| \by_{-S}, \bX)= f(v_i).
\end{equation}
We consider the following bias-corrected estimator of the linear functional $\bc^\top\theta$, for some $c \in \mathbb R^d$: 
\begin{equation}\label{eq:define_hattheta_db}
    \bct= \bc^\top \tilde\btheta+ \frac{1}{|S_2|} \sum_{i \in S_2} \Big[w_i(\overline y_i - f(\tilde v_i)) \Big] \bu^\top \bX_i,
\end{equation}
where $\tilde\btheta$ and $\tilde v_i$ are defined by~\eqref{eq:define_theta_isl} and~\eqref{eq:define_vi} respectively. Here, $w_i$'s are suitable data-dependent weights and $\bu \in \mathbb R^d$ is a projection vector we will construct below.

Note that, using~\eqref{eq:y_bern}, we have for $i \in S_2$, $\text{Var}[\overline y_i| y_{-S}, \bX] = f(v_i)\big(1-f(v_i)\big)$. Define
\begin{equation}\label{eq:def_eps_i}
    \varepsilon_i= \overline {y}_i -f(v_i).
\end{equation}
Applying a Taylor series expansion of $f(\cdot)$ around $\tilde v_i$ to the summand in \eqref{eq:define_hattheta_db} yields

\begin{align}
    &\frac{1}{|S_2|} \sum_{i \in S_2} \Big[w_i \big(\overline y_i -f(\tilde v_i)\big)\Big] \bu^\top \boldsymbol X_i 
    \nonumber \\
    &= \frac{1}{|S_2|} \sum_{i \in S_2} w_i\Big[\big(\overline y_i-f(v_i)\big) + \big(f(v_i)-f(\tilde v_i)\big)\Big] \bu^\top \boldsymbol X_i \nonumber\\
    &= \frac{1}{|S_2|} \sum_{i \in S_2} w_i \epsilon_i \bu^\top\boldsymbol X_i + \frac{1}{|S_2|} \sum_{i \in S_2} w_i\Big[f'(\tilde v_i)\boldsymbol X_i^\top(\btheta - \tilde \btheta) + \underbrace{\frac{1}{2}f''(\tilde v_i + t\boldsymbol X_i^\top(\btheta- \tilde \btheta)) \big(\boldsymbol X_i^\top (\btheta-\tilde\btheta)\big)^2}_{=: \mathcal{R}_i}\Big] \bu^\top \bX_i \nonumber\\
    &= \frac{1}{|S_2|} \sum_{i \in S_2} w_i\epsilon_i\bu^\top\boldsymbol X_i + \sum_{i \in S_2} w_i\Big[f'(\tilde v_i)\boldsymbol X_i^\top(\boldsymbol{\theta-\hat\theta})\Big] \bu^\top \boldsymbol X_i + \frac{1}{|S_2|} \sum_{i \in S_2} w_i \mathcal{R}_i \bu^\top \boldsymbol X_i,
    \label{eq:TS1}
\end{align}

\noindent for some $t \in (0,1)$. Substituting \eqref{eq:TS1} into \eqref{eq:define_hattheta_db} yields
\begin{align}
    \bct- \bc^\top \btheta = \frac{1}{|S_2|} \sum_{i \in S_2} w_i \epsilon_i \bu^\top \boldsymbol X_i &+ \Bigg\{\bigg[\Big(\frac{1}{|S_2|} \sum_{i \in S_2} w_if'(\tilde v_i) \bu^\top \bX_i \bX_i^\top\Big) - \bc^\top\bigg](\btheta-\tilde\btheta)\Bigg\} \nonumber \\ &+ \frac{1}{|S_2|}\sum_{i \in S_2} w_i\mathcal{R}_i \bu^\top \bX_i. \label{eq:target_debias}
\end{align}

\noindent We will choose the weights $w_i$ and the projection vector $\bu$ in a way such that the first term is asymptotically normal and whose asymptotic variance is minimized; and the second and third summand will be negligible compared to the first quantity.
By the definition of $\varepsilon_i$ given by~\eqref{eq:def_eps_i}, we have
\begin{align*}
    \textrm{Var} \Bigg(
    \frac{1}{|S_2|} \sum_{i \in S_2} w_i \epsilon_i \bu^\top \boldsymbol X_i \Bigg| \bX, \by_{-S} \Bigg)= \frac{1}{|S_2|^2} \sum_{i \in S_2} w^2_i f(v_i) (1- f(v_i)) (\bu^\top \bX_i)^2
\end{align*}
Balancing the weights of each term of the above display with the second summand of the RHS, we will choose weights $w_i$ such that $w_i f'(\tilde v_i)= w^2_i f(\tilde v_i) (1- f(\tilde v_i))$, which implies that
\begin{equation}\label{eq:define_w}
    w_i= \frac{f'(\tilde v_i)}{f(\tilde v_i) (1- f(\tilde v_i))}.
\end{equation}
However, plugging the definition of $f$ defined by~\eqref{eq:define_f}, simple algebra yields that $w_i=2$. Given these constant weight, we finally choose the projection vector. Define the set
\begin{align}\label{eq:constrained set}
    \mathcal{A}:= &\Bigg\{ \bu \in \mathbb{R}^d: \Big\|\frac{2}{|S_2|}\sum_{i \in S_2} f'(\tilde v_i) \boldsymbol{X_i X_i^\top u} - \bc\Big\|_\infty \leq C_1 \|\bc\|_2\sqrt{\frac{\log d}{n}}, \nonumber \\
    & \Big|\frac{2}{|S_2|}\sum_{i \in S_2} f'(\tilde v_i) (\bu^\top \bX_i) (\bX^\top_i \bc) - \|\bc\|^2_2\Big| \leq C_2 \|\bc\|^2_2\sqrt{\frac{\log d}{n}}, \quad
    \max\limits_{i \in S_2} |\boldsymbol{X_i^\top u}| \leq C_3 \|\bc\|_2\sqrt{\log n} \Bigg\},
\end{align}
for some $C_1,C_2, C_3>0$. Our choice for the projection $\hat\bu$ is given by
\begin{align}
    \hat{\bu}:= \argmin_{ \bu \in \mathcal{A}}\frac{2}{|S_2|} \sum_{i \in S_2}  f'(\tilde  v_i) (\bu^\top\bX_i)^2, \label{eq:def-uhat} 
\end{align}
With this choice of $\hat \bu$ given by~\eqref{eq:def-uhat} and weights $w_i$ given by~\eqref{eq:define_w}, we have our bias-corrected estimator
\begin{equation}\label{eq:theta_db}
    \bct= \bc^\top \tilde\btheta+ \frac{2}{|S_2|} \sum_{i \in S_2} \Big[\overline y_i - f(\tilde v_i) \Big] \hat\bu^\top \bX_i,
\end{equation}
The use of projection vectors for bias-corrected estimation has been extensively studied in the context of generalized linear models with independent observations~\cite{ma_2020, cai_debiased_clt, guo2021inference}. Our main methodological contribution is to construct projection vectors $\hat\bu$ to settings with tensor network dependence among observations, and to establish asymptotic normality of linear functionals based on pseudo-likelihood estimators. Note that, unlike many prior works, we do not assume any $\ell_1$ or $\ell_2$-bounds on $\bc$, instead, our formulation in~\eqref{eq:constrained set} accommodates general $\bc$. If $\|\bc\|_1= O(1)$, the second constraint in~\eqref{eq:constrained set} is automatically satisfied by the first. As a consequence, to construct bias-corrected estimator for $\theta_j$ for any $j\in[d]$, we do not need the second constraint. This makes the computation of the projection vector faster.

\subsection{Hypothesis testing}\label{sec:testing}
We will show in Section~\ref{sec:theory} how the bias-corrected estimator $\bct$ is asymptotically normal under mild assumptions of covariates and standard sparsity assumptions. The main application of Theorem~\ref{thm:main} is to derive confidence intervals and do statistical hypothesis
tests for our high-dimensional network model.

\noindent \textbf{Confidence Interval:} 

To construct a confidence interval of $\bc^\top\btheta$, we need to estimate the asymptotic variance of $\bc^\top\btheta$. Note that the asymptotic variance of $\bct$, conditioned on $y_{-S}, y_{S_1} \bX$ is given by 
\begin{align}\label{eq:define_vj}
    V= \frac{1}{|S_2|^2} \sum_{i \in S_2} 4 f(v_i)\big(1-f(v_i)\big) (\hat \bu^\top \bX_i)^2.
    \end{align}
Given the definition of weights $w_i$ by~\eqref{eq:define_w}, we can estimate $V$ by plugging in the initial estimator $\tilde \btheta$ defined by~\eqref{eq:define_theta_isl}:
\begin{equation}\label{eq:estimated_var}
    \widehat V= \frac{1}{|S_2|^2} \sum_{i \in S_2} 4 f(\tilde v_i)\big(1-f( \tilde v_i)\big)(\hat \bu^\top \bX_i)^2,
\end{equation}
where $\tilde v_i$ is defined by~\eqref{eq:define_vi}. The natural $(1-\alpha)$-level confidence interval for $\bc^\top \btheta$ is now given by
\begin{equation}\label{eq:define_conf_int}
    \mathcal{C} :=  \mathcal{C}(\alpha) = \left[ \bct - z_{\alpha/2} \sqrt{\widehat V}, \bct+ z_{\alpha/2} \sqrt{\widehat{V}}\right],
\end{equation}
where $z_{\alpha}$ denotes the $\alpha$-th quantile of a standard normal distribution, and $\bct$ is defined by~\eqref{eq:theta_db}. Theoretical justification of this confidence interval is given by Proposition~\ref{prop:CI} under the standard sparsity assumption $s \ll \sqrt{n}$ (up to logarithmic factors). Note that, to construct an adaptive confidence interval without the knowledge of sparsity parameter $s$, one requires the condition $s \ll \sqrt{n}$ for independent data ($\bA(\bG_n) \equiv 0$) in both linear regression ~\cite{javanmard2018debiasing,van2014asymptotically} and logistic regression~\cite{cai_debiased_clt} problems.

An immediate consequence of the construction of the confidence interval $\mathcal{C}$ 
is that it can be used to conduct a test for the statistical hypothesis $H_0: \bc^\top \btheta = c^\star$. We choose the test statistic 
\begin{align}\label{eq:test_stat}
    T= \frac{(\bct -c^\star)}{\sqrt{\widehat V}}.
\end{align}
An $\alpha$-level test is the one which rejects $H_0$ if and only if $|T| \ge z_{\alpha/2}$. By selecting $\bc= \be_j$, we obtain inferential guarantees for each coordinate of the regression parameter $\theta_j$, $j\in [d]$. For such example, to denote the dependence on $j$, in the sequel we will denote the test statistic for testing $\theta_j$ by $T_j$ and corresponding estimated variance by $\widehat V_j$.

\noindent \textbf{Simultaneous Inference} 

Our method can be further used to perform multiple hypothesis testing for a number of regression coefficients simultaneously with desired control on Family-Wise Error Rate (FWER) or False Discovery Rate (FDR). In the standard multiple hypothesis problem, we are interested in
simultaneously testing the null hypotheses $H_{0j}: \theta_j = 0, \, j \in \mathcal{J}$ for any subset $\mathcal{J} \subseteq [d]$. Let $\{\hat{\theta}_j\}_{j \in \mathcal{J}}$ and
$\{\widehat{V}_j\}_{j \in \mathcal{J}}$ be the proposed bias-corrected estimators~\eqref{eq:theta_db} and their corresponding estimated variances~\eqref{eq:estimated_var} respectively for $\bc= \be_j$, $j \in \mathcal{J}$. 

To control FWER at a level $\alpha \in (0,1)$, the traditional method is to do a Bonferroni correction. This implies for any $j \in \mathcal{J}$, we reject the null hypothesis $H_{0j}$ whenever $|T_j| \ge z_{\alpha/(2|\mathcal{J}|)}$, where $T_j= \hat\theta_j/ \sqrt{\widehat{V}_j}$.
By the standard union bound, FWER can be controlled by
\[
\mathrm{FWER}
\leq |\mathcal{J}| \mathbb{P}_{H_{0j}}\!\left(|T_{j}| \geq z_{\alpha/(2|\mathcal{J}|)}\right)
\leq \alpha.
\]
However, if $|\mathcal{J}|$ is too large, such a procedure might yield low power. Therefore one might use the bootstrap-based alternative cutoffs~\cite{chernozhukov2017central, dezeure2017high} popular in high-dimensional statistics.Since Bonferroni procedure is more conservative, it is often preferable to consider FDR control instead. A standard method in this regard is the modified Benjamini-Hochberg (BH) algorithm~\cite{benjamini1995controlling} 
which, for a carefully chosen cutoff $\kappa$, reject all the null hypothesis $H_{0j}$ such that $|T_j| \ge \kappa$. If $\mathcal{J}_0$ denote the subset of all \textit{true} null hypothesis in the set $\mathcal{J}$, we have 
\[
\mathrm{FDR}(\kappa) = \mathbb{E}\!\left[
\frac{\sum_{j \in \mathcal{J}_0} 1\{|T_{j}| \geq \kappa\}}
{\max\{\sum_{j \in \mathcal{J}} 1\{|T_{j}| \geq \kappa\}, 1\}}
\right].
\]
Here, $\sum_{j \in \mathcal{J}} 1\{|T_{j}| \geq \kappa\}$ denote the total number of rejected hypothesis. By assuming number of true alternatives are sparse, the proportion of nulls falsely
rejected among all the true nulls at the threshold level $\kappa$, 
$\frac{1}{|\mathcal{J}_0|}\sum_{j \in \mathcal{J}_0} 1\{|T_{j}| \geq \kappa\}$,  can be approximated by standard normal tail
$2 - 2\Phi(\kappa)$. This approximation implies that for $0 < \alpha <1$, the choice of cutoff $\hat\kappa$ is given by
\[
\hat\kappa:= \inf \left\{  0 \leq \kappa \leq \sqrt{2 \log |\mathcal{J}| - 2 \log \log |\mathcal{J}|} :
\frac{|J|\{2 - 2\Phi(\kappa)\}}
{\max\{\sum_{j \in \mathcal{J}} 1(|T_{j}| \geq \kappa), 1\}} \leq \alpha\right\}
\]
The range of $\kappa$ and the proof that the above method controls FDR at level-$\alpha$ follows the same steps as ~\cite{liu2013gaussian, ma_2020} as $n, |\mathcal{J}| \rightarrow \infty$.

\subsection{Inference for quadratic functionals} \label{sec:quadratic}
Our method developed in Section~\ref{sec:lin_func} can be further extended for statistical inference of quadratic functional $Q= \btheta^\top \bM \btheta$, for some symmetric positive definite $\bM \in \mathbb{R}^{d \times d}$. The common choice of $\bM$ are $\mathbf{I}$ or $\bSigma= \textrm{Var}(\bX_i)$. 
The quadratic form has been used often in applications related to heritability estimation in Genome-Wide Association Studies (GWAS)~\cite{manolio2009finding,yang2015genetic}. Often in literature, the estimation procedure of heritability relies on the
classical asymptotics that does not take into account the high-dimensionality of the SNPs compared to the sample sizes~\cite{bulik2015atlas} or consider continuous traits \cite{guo2019optimal,zhao2023cross} or does not consider dependence among phenotypes \cite{ma2024statistical}. We address these gaps by providing inference under quadratic functionals under dependent observations.

Note that our first-step estimator $\tilde \btheta$ has the following bias:
\begin{equation}\label{eq:quad_split}
    \tilde \btheta^\top \bM \tilde \btheta- Q= 2 \tilde \btheta^\top \bM (\tilde \btheta- \btheta) - (\tilde \btheta- \btheta)^\top \bM (\tilde \btheta- \btheta).
\end{equation}

The second term in the RHS of the above display is of smaller order compared to the first term, which causes the choice of estimator $\tilde \btheta^\top \bM \tilde \btheta$ biased. However, the first summand can be expressed as the linear form $\bc^\top (\tilde\btheta -\btheta)$ for $\bc^\top = 2\tilde\btheta^\top \bM$. Hence, by our construction of bias-corrected estimator of linear functional, we choose the projection vector $\hat\bu_M= \hat\bu_M (\tilde\btheta)$ given by~\eqref{eq:def-uhat}. Note that, here we choose the constrained set $\mathcal{A}= \mathcal{A}(\bM)$ 
will be defined as follows:
\begin{align}\label{eq:constrained set_quad}
    \mathcal{A}:= &\Bigg\{ \bu \in \mathbb{R}^d: \Big\|\frac{2}{|S_2|}\sum_{i \in S_2} f'(\tilde v_i) \boldsymbol{X_i X_i^\top u} - \bM \tilde\btheta\Big\|_\infty \leq C_1 \|\bM \tilde\btheta\|_2\sqrt{\frac{\log d}{n}}, \nonumber \\
    & \Big|\frac{2}{|S_2|}\sum_{i \in S_2} f'(\tilde v_i) (\tilde\btheta^\top \bM \bX_i) (\bu^\top \bX_i) - \|\bM \tilde\btheta\|^2_2\Big| \leq C_2 \|\bM \tilde\btheta\|^2_2\sqrt{\frac{\log d}{n}}, \nonumber \\
    & \max\limits_{i \in S_2} |\boldsymbol{X_i^\top u}| \leq C_3 \|\bM \tilde\btheta\|_2\sqrt{\log n} \Bigg\},
\end{align}
for some constants $C_1,C_2,C_3>0$. Given such a projection vector $\hat\bu_M$, we define the estimator of $Q$ as $\widehat Q =\max (\tilde Q, 0)$ where $\tilde Q$ is given by:
\begin{equation*}
    \tilde Q= \tilde \btheta^\top \bM \tilde \btheta +  \frac{4}{|S_2|} \sum_{i \in S_2} \Big[\bar{y_i} - f(\tilde v_i) \Big] \hat\bu^\top_M \bX_i, 
\end{equation*}
where recall the definition of the function $f$ from~\eqref{eq:define_f}. Since $\bM$ is positive definite matrix, truncating at $0$ yields a better estimator than using $\tilde Q$. Next, we estimate the variance of the estimator (conditional on $\by_{-S}, \by_{S_1}, \bX$) by
\begin{equation}
    \widehat V_{\bM}= \frac{16}{|S_2|^2} \sum_{i \in S_2} f(\tilde v_i)\big(1-f( \tilde v_i)\big)(\hat \bu^\top_M \bX_i)^2 +\frac{1}{n}.
\end{equation}
Here we have added the quantity $1/n$ as the upper bound of the second summand in~\eqref{eq:quad_split}; similar modification is common in classical i.i.d. setup~\cite{ma2024statistical}. As a consequence, the natural $(1-\alpha)$-level confidence interval can be given by
\begin{align}\label{eq:quad_ci}
    \mathcal{C}_{\bM}= \left[ \max\Big(\widehat Q- z_{1-\alpha/2} \sqrt{\widehat V_{\bM}},0 \Big), \widehat Q + z_{1-\alpha/2} \sqrt{\widehat V_{\bM}} \right]
\end{align}

We want to highlight that throughout the section we have assumed that the matrix $\bM$ is known to the statistician. However, there might be applications where $\bM$ is unknown. For example, in population genetic, one needs to estimate heritability given by $Q= \btheta^\top \bSigma \btheta$, where the population covariance matrix $\bSigma$ is unknown to a geneticist. In that case, we can plug-in an estimator $\widehat\bSigma$ in our procedure to perform statistical inference; similar idea has been explored for i.i.d. datasets recently~\cite{guo2019optimal,ma2024statistical}. In the semi-supervised setting~\cite{tony2020semisupervised}, where additional unlabeled covariates are available, one can use them to construct an estimator $\widehat\bSigma$.

\subsection{Theoretical guarantees}\label{sec:theory}

In this section, we provide theoretical justification behind our proposed estimator $\bct$ 
defined by~\eqref{eq:theta_db}. We begin by stating our assumptions on the interaction tensors $\bA_n$, 
the regression parameter $\btheta$ and the covariate distribution $\mathbb{P}_X$. For any vertex $i \in [n]$ of the hypergraph $\bG_n$, we define its neighbors by 
\begin{equation}\label{eq:defn_nbr}
\mathcal{N}_i := \left\{j \in [n]: \exists \be \in E \quad \text{such that} \quad i,j \in \be \right\}
\end{equation}
\begin{ass}\label{assn:graph}
    The hypergraph $\bG_n$ satisfies bounded degree condition 
    $\sup_n \max_{i \in [n]} \sum_{\be: i \in \be} g_{\be} \le 1$ and $\sup_n \max_{i \in [n]} |\mathcal{N}_i|= \Delta <\infty$.
\end{ass}
\begin{ass}\label{assn:theta}
    The regression coefficients $\btheta \in \mathbb{R}^d$ satisfies $\|\btheta\|_0=s$, $\|\btheta\|_2 \le C$ for some $C>0$.
\end{ass}
\begin{ass}\label{assn:covariates}
    The covariates $\bX_i \in \mathbb{R}^d$ are i.i.d. sub-Gaussian vectors with bounded sub-Gaussian norm $C_1>0$. Define by $\bSigma= \mathbb{E}(\bX_1 \bX^\top_1) \in \mathbb{R}^{d \times d}$ the covariance matrix of the covariates. There exists $\kappa>0$ independent of $n$ and $d$ such that 
    \[
    \frac{1}{\kappa} \le \lambda_{\min}(\bSigma) \le \lambda_{\max}(\bSigma) \le \kappa.
    \]
\end{ass}
Note that, the assumptions on the regression parameters and covariates are standard in high-dimensional statistics literature~\cite{buhlmann2011statistics}. In Assumption~\ref{assn:graph}, we assume the underlying tensor to be sparse. 
This is required to obtain the strong-independent set $S(\mathbf{G}_n)$ large. Indeed if the maximum size of the neighborhoods $\max_i|\mathcal{N}_i| \le \Delta$, then the graph contains a strong-independent set of size at least $\lfloor n/(\Delta+1) \rfloor$. Further, one can efficiently find such a strong-independent set of size $\Omega(n)$ by a greedy algorithm. Therefore, we will assume throughout that the set used to construct our estimator satisfies 
\begin{equation}\label{eq:size_set}
    |S(\mathbf{G}_n)|= \Omega(n).
\end{equation}
We are now ready to state our first theorem.
\begin{theorem}\label{thm:main}
     Suppose the Assumptions~\ref{assn:graph}-\ref{assn:covariates} hold and $V$ is defined by~\eqref{eq:define_vj}. If $s = o\Big(\frac{\sqrt n}{\sqrt{\log n} \log d}\Big)$ and $\lambda_n= C \sqrt{\log d/n}$ for some $C>0$, then 
     we have
     \begin{equation}
         \frac{(\bct - \bc^\top \btheta)}{\sqrt{\widehat V}} \xrightarrow{D} Z,
     \end{equation}
     where $Z \sim N(0,1)$ and $\bct$ is defined by~\eqref{eq:theta_db}.
\end{theorem}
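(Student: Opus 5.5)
The plan is to start from the decomposition \eqref{eq:target_debias} with $w_i=2$ and $\bu=\hat\bu$, and to condition throughout on $\mathcal{F}:=\sigma(\by_{-S},\by_{S_1},\bX)$. Given $\mathcal{F}$, the estimator $\tilde\btheta$ is fixed, since it depends only on $\by_{S_1},\by_{-S},\bX$. The values $\tilde v_i$, $i\in S_2$, and the projection $\hat\bu$ are fixed as well. Because $S$ is a strong independent set, the $\varepsilon_i$, $i\in S_2$, are conditionally independent and mean zero, with variance $f(v_i)(1-f(v_i))$. We write
\[
\bct-\bc^\top\btheta = T_1+T_2+T_3,
\]
where $T_1=\frac{2}{|S_2|}\sum_{i\in S_2}\varepsilon_i\hat\bu^\top\bX_i$ is the noise term, $T_2$ is the bias term in curly brackets, and $T_3$ is the Taylor remainder. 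The goal is to show that $T_1/\sqrt V\to N(0,1)$ conditionally on $\mathcal{F}$, that $(T_2+T_3)/\sqrt V=o_P(1)$, and that $\widehat V/V\to1$ in probability; Slutsky then gives the claim.

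\textbf{Step 1: feasibility and lower bound on $V$.} We will use Proposition~\ref{thm:first_step} (consistency of $\tilde\btheta$), which gives $\|\tilde\btheta-\btheta\|_1=O_P(s\sqrt{\log d/n})$ and $\|\tilde\btheta-\btheta\|_2=O_P(\sqrt{s\log d/n})$. The first substantive task is to show that, with probability tending to one, the set $\mathcal{A}$ in \eqref{eq:constrained set} is nonempty. The natural candidate is $\bu_0=\bH^{-1}\bc$, where $\bH=\mathbb{E}[2f'(v_i)\bX_i\bX_i^\top\mid\cdot]$ is the population weighted Gram matrix.

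Its eigenvalues are bounded above and below. The lower bound holds because $|m_i|\le1$ by Assumption~\ref{assn:graph}, and $\bX_i^\top\btheta$ is sub-Gaussian with bounded norm by Assumptions~\ref{assn:theta}--\ref{assn:covariates}. Hence $f'(v_i)$ is bounded below on an event of constant probability, and together with $\lambda_{\min}(\bSigma)\ge1/\kappa$ this bounds $\bH$ below.

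The three constraints are then checked for $\bu_0$ as follows.
\begin{itemize}
\item The sup-norm constraint follows from sub-exponential concentration over $d$ coordinates. Replacing $\tilde v_i$ by $v_i$ costs $\max_i|\bX_i^\top(\tilde\btheta-\btheta)|\cdot$(a Gram-type term), and $\max_i|\bX_i^\top(\tilde\btheta-\btheta)|\lesssim\sqrt{\log n}\,\|\tilde\btheta-\btheta\|_1\max\|\bX_i\|_\infty$; a cleaner route is Hölder combined with the $\ell_1$ rate.
\item The quadratic constraint follows from scalar concentration.
\item The max constraint follows because $\bX_i^\top\bu_0$ is sub-Gaussian with scale $\|\bc\|_2$.
\end{itemize}
Conversely, for any $\bu\in\mathcal{A}$, the second constraint combined with Cauchy--Schwarz gives $\frac{2}{|S_2|}\sum f'(\tilde v_i)(\bu^\top\bX_i)^2\gtrsim\|\bc\|_2^2/\lambda_{\max}$. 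Since $f(1-f)=f'/2$ and $f'(v_i)/f'(\tilde v_i)$ is close to one, this yields $V\gtrsim\|\bc\|_2^2/n$.

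\textbf{Step 2: bias and CLT.} For the bias term we use Hölder: $|T_2|\le\|\frac{2}{|S_2|}\sum f'(\tilde v_i)\bX_i\bX_i^\top\hat\bu-\bc\|_\infty\|\tilde\btheta-\btheta\|_1\lesssim\|\bc\|_2 s\log d/n$. Divided by $\sqrt V\gtrsim\|\bc\|_2/\sqrt n$, this is $O(s\log d/\sqrt n)=o(1)$ under the stated sparsity.

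For the remainder, $|f''|\le C$, so
\[
|T_3|\lesssim \max_i|\hat\bu^\top\bX_i|\cdot\frac1{|S_2|}\sum(\bX_i^\top(\tilde\btheta-\btheta))^2\lesssim\|\bc\|_2\sqrt{\log n}\cdot s\log d/n,
\]
where the last step uses restricted eigenvalue control of the empirical Gram matrix on the cone. Divided by $\sqrt V$, this is $O(s\sqrt{\log n}\log d/\sqrt n)=o(1)$, which is exactly the sparsity condition in the theorem.

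For the CLT, apply Lindeberg conditionally on $\mathcal{F}$. The summands are bounded by $\frac{2}{|S_2|}C_3\|\bc\|_2\sqrt{\log n}$, while $\sqrt V\gtrsim\|\bc\|_2/\sqrt n$, so the Lindeberg ratio is $O(\sqrt{\log n/n})\to0$. The conditional CLT then passes to the unconditional one by dominated convergence of characteristic functions.

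\textbf{Step 3: variance estimation and the main obstacle.} For $\widehat V/V\to1$, we note that $|f(1-f)(\tilde v_i)-f(1-f)(v_i)|\le C|\bX_i^\top(\tilde\btheta-\btheta)|$. Relative to $f(1-f)(v_i)$, which is bounded below on most indices, this error is small. We therefore bound
\[
|\widehat V-V|\le \frac{C}{|S_2|^2}\sum|\bX_i^\top(\tilde\btheta-\btheta)|(\hat\bu^\top\bX_i)^2,
\]
and control it by Cauchy--Schwarz with the prediction error rate $\sqrt{s\log d/n}\to0$.

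I expect the main obstacle to be Step 1. One must show that $\bu_0$ lies in $\mathcal{A}$ even though the weights $f'(\tilde v_i)$ are data dependent and, through $m_i(\by)$ and $\tilde\btheta$, are not independent of $\bX_{S_2}$. The plan is to prove the concentration bounds with weights $f'(v_i)$ first; these still depend on $\by_{-S}$, which is correlated with $\bX_{S_2}$ through neighbors. For that reason we will take a union bound over the at most $3^{\Delta}$-type finitely many values of $m_i$ (noting that $m_i\in[-1,1]$ enters through a Lipschitz function), and then transfer to $\tilde v_i$ via the Lipschitz property of $f'$.
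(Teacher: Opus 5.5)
Most of your plan is sound and follows the paper's structure: the conditioning on $\sigma(\by_{-S},\by_{S_1},\bX)$, Lindeberg, H\"older for the bias, Cauchy--Schwarz in place of the paper's Lagrangian lower bound on $\hat\bu^\top\hat{\boldsymbol\Gamma}\hat\bu$, and a uniform restricted-eigenvalue bound for the remainder are all fine. But Step 1 has a genuine gap.

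You center the feasibility candidate at the population matrix with the \emph{true} weights, $\bu_0=\mathbf{H}^{-1}\bc$. You then move from $f'(v_i)$ to $f'(\tilde v_i)$ by Lipschitz continuity, and that step costs far more than the tolerance $C_1\|\bc\|_2\sqrt{\log d/n}$ that defines $\mathcal{A}$. To first order, the $k$-th coordinate of $\hat{\boldsymbol\Gamma}\bu_0-\bc$ picks up
\[
\frac{2}{|S_2|}\sum_{i\in S_2} f''(v_i)\,X_{ik}\,(\bX_i^\top\bu_0)\,\bX_i^\top(\tilde\btheta-\btheta).
\]
Its mean is a linear functional $\mathbf{a}_k^\top(\tilde\btheta-\btheta)$ with $\|\mathbf{a}_k\|_2$ generically of order $\|\bc\|_2$. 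For example, with Gaussian designs, $\bc=\be_1$ and $k=1$, it contains a nonzero multiple of $\btheta^\top(\tilde\btheta-\btheta)$. Under $\ell_1$ shrinkage that quantity is typically of order $\lambda_n\|\btheta\|_1$, which can be as large as $\sqrt{s\log d/n}$.

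The best general bound is therefore $\|\bc\|_2\|\tilde\btheta-\btheta\|_2\asymp\|\bc\|_2\sqrt{s\log d/n}$. Your H\"older route gives the even larger $\|\bc\|_2\, s\sqrt{\log d\,\log(nd)/n}$. Either way you miss the tolerance by a factor of at least $\sqrt{s}\to\infty$, and the same happens for the second constraint. So nonemptiness of $\mathcal{A}$ is not established. Everything downstream that uses $\hat\bu\in\mathcal{A}$ is then unsupported: the existence of $\hat\bu$, your bias bound via the first constraint, and your lower bound $V\gtrsim\|\bc\|_2^2/n$ via the second.

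The missing idea is the paper's centering. Take $\bu^\star=\boldsymbol\Gamma^{-1}\bc$ with $\boldsymbol\Gamma$ as in \eqref{eq:define_gamma}, the expectation of $\hat{\boldsymbol\Gamma}$ with the first-stage quantities, and hence the weights $f'(\tilde v_i)$, held fixed. Then $\hat{\boldsymbol\Gamma}\bu^\star-\bc$ is a centered average, and Bernstein plus a union bound over the $d$ coordinates gives the $\sqrt{\log d/n}$ rate directly. The comparison of $f'(\tilde v_i)$ with $f'(v_i)$ is needed only to show that $\lambda_{\min}(\boldsymbol\Gamma)$ is bounded below, cf.~\eqref{eq:pos_def_steps}. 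There an $O(\sqrt{s\log d/n})=o(1)$ loss is harmless.

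Separately, your remedy for the dependence of the weights on $\bX_{S_2}$ does not work as stated. A union bound over the values of $m_i$ fails because the vector $(m_i)_{i\in S_2}$ ranges over exponentially many joint configurations. The paper does not resolve this point either; it simply treats the weights as fixed under its conditioning. Adopting its centering puts you on the same footing as the paper, but you should not present the dependence issue as settled.
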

The result characterizes the precise asymptotic distribution of $\bct$. The asymptotic variance involves the quantity $V$ which is generally unknown to the statistician. As we will show in Proposition~\ref{prop:CI} below, replacing $V$ with its natural estimator $\widehat V$ defined by~\eqref{eq:estimated_var} yields a valid confidence interval. Theorem~\ref{thm:main} contributes to the long line of literature of inference in generalized linear models~\cite{van2014asymptotically,zhang2014confidence,cai_debiased_clt, guo2021inference,jankova2016confidence}. However, to our knowledge, ours is the first theorem to provide inferential guarantees under network dependence models in high dimension. Further we note that many of the existing methods of statistical inference requires boundedness on the individual probability, i.e., assumes there exists $\mu \in (0, 0.5)$ such that $\mathbb{P}(y_i=1|\bX_i) \in (\mu,1-\mu)$~\cite{ning2017,shi2021statistical}. However, the current work does not posit such conditions. Given our tensor dependence among the subjects, the individual probabilities can get arbitrarily close to $0$ or $1$.

Our next result provides the asymptotic coverage guarantee of the confidence interval $\mathcal{C}$ defined by~\ref{eq:define_conf_int}, along with an upper bound on the length of the confidence interval.
\begin{prop}\label{prop:CI}
    Suppose the assumptions of Theorem~\ref{thm:main} hold. Then for any 
    $0<\alpha <1$, we have
    \begin{equation*}
        \lim\limits_{n \rightarrow \infty} \inf_{\btheta} \mathbb{P}(\bc^\top \btheta \in \mathcal{C}) \ge 1- \alpha,
    \end{equation*}
    where $\mathcal{C}$ is defined by~\eqref{eq:define_conf_int}. Further, defining $L(\mathcal{C})$ to be the length of the confidence interval, we have for any $\delta>0$,
    \begin{align*}
       \lim\limits_{n \rightarrow \infty} \mathbb{P}_{\btheta}\Big(L(\mathcal{C}) \ge (1+\delta) 2 z_{\alpha/2}\sqrt{V}\Big)=0,
    \end{align*}
    where $V$ is defined by~\eqref{eq:define_vj}.
\end{prop}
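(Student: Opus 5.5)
The plan is to derive Proposition~\ref{prop:CI} from Theorem~\ref{thm:main} together with a ratio-consistency statement, $\widehat V/V \xrightarrow{\mathbb{P}} 1$. Coverage and the length bound then follow directly.

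\textbf{Coverage.} The event $\bc^\top\btheta\in\mathcal{C}$ is exactly the event $|T|\le z_{\alpha/2}$, where $T=(\bct-\bc^\top\btheta)/\sqrt{\widehat V}$. By Theorem~\ref{thm:main}, $T\xrightarrow{D} Z$. Since $\Phi$ is continuous, the portmanteau theorem gives $\mathbb{P}(|T|\le z_{\alpha/2})\to 1-\alpha$.

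To obtain the infimum over $\btheta$, I would argue that every bound in the proof of Theorem~\ref{thm:main} depends on $\btheta$ only through $s$ and $\|\btheta\|_2\le C$. These bounds include:
\begin{itemize}
\item the consistency rate of Proposition~\ref{thm:first_step};
\item the bias term in~\eqref{eq:target_debias}, controlled by the $\ell_\infty$ constraint in $\mathcal{A}$ together with the $\ell_1$ error of $\tilde\btheta$;
\item the remainder $\mathcal R_i$, controlled via $|f''|\le 2$, the bound $\max_i|\bX_i^\top\hat\bu|\le C_3\|\bc\|_2\sqrt{\log n}$, and $\frac1{|S_2|}\sum_i(\bX_i^\top(\tilde\btheta-\btheta))^2$;
\item the conditional Lindeberg CLT for $\frac{2}{|S_2|}\sum_{i\in S_2}\varepsilon_i\hat\bu^\top\bX_i$ given $\by_{-S},\by_{S_1},\bX$, which is valid because the $y_i$, $i\in S_2$, are conditionally independent and $\hat\bu$ depends only on $\tilde\btheta$ and $\{\bX_i\}_{i \in S_2}$.
\end{itemize}
Hence the convergence is uniform over $\{\btheta:\|\btheta\|_0\le s,\|\btheta\|_2\le C\}$. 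Taking $\liminf$ of the infimum then yields the bound $\ge 1-\alpha$.

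\textbf{Ratio consistency (main obstacle).} We have
\[
|\widehat V-V|\le \frac{4}{|S_2|^2}\sum_{i\in S_2}|h(\tilde v_i)-h(v_i)|(\hat\bu^\top\bX_i)^2,
\qquad h(x)=f(x)(1-f(x))=\tfrac14\sech^2 x .
\]
The difficulty is that $h$ can be arbitrarily small, since probabilities are not bounded away from $0$ and $1$. An absolute Lipschitz bound is therefore not enough; a relative bound is needed.

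For this I would use the ratio estimate $h(x)/h(x')\in[e^{-2|x-x'|},e^{2|x-x'|}]$, which holds since $|(\log\sech^2)'|=2|\tanh|\le 2$. This gives
\[
\widehat V/V\in\big[e^{-2\delta_n},e^{2\delta_n}\big], \qquad \delta_n=\max_{i\in S_2}|\bX_i^\top(\tilde\btheta-\btheta)|.
\]
By Hölder's inequality, $\delta_n\le \max_{i}\|\bX_i\|_\infty\|\tilde\btheta-\btheta\|_1$. A sub-Gaussian maximal bound gives $\max_i\|\bX_i\|_\infty=O_P(\sqrt{\log(nd)})$. Proposition~\ref{thm:first_step} gives $\|\tilde\btheta-\btheta\|_1=O_P(s\sqrt{\log d/n})$. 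Under $s=o(\sqrt n/(\sqrt{\log n}\log d))$ the product $\delta_n$ is $o_P(1)$, so $\widehat V/V\to1$ in probability, uniformly in $\btheta$.

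This step is the delicate part: it requires that the sparsity condition absorbs the $\sqrt{\log(nd)}$ factor. Note that $\tilde\btheta$ is independent of $\{\bX_i\}_{i\in S_2}$ given $S_1$ data, so one may sharpen the bound by conditioning: $\bX_i^\top(\tilde\btheta-\btheta)$ is sub-Gaussian with scale $\|\tilde\btheta-\btheta\|_2$, giving $\delta_n=O_P(\sqrt{\log n}\,\|\tilde\btheta-\btheta\|_2)=o_P(1)$ more easily. I would use this conditioning route first.

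\textbf{Length.} We have $L(\mathcal C)=2z_{\alpha/2}\sqrt{\widehat V}$. So
\[
\mathbb{P}\big(L(\mathcal C)\ge(1+\delta)2z_{\alpha/2}\sqrt V\big)=\mathbb{P}\big(\sqrt{\widehat V/V}\ge 1+\delta\big)\to0,
\]
by the ratio consistency just established.
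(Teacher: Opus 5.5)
Your proposal is correct. Its overall architecture matches the paper's: coverage from Theorem~\ref{thm:main}, and the length bound from ratio consistency $\widehat V/V\xrightarrow{\mathbb P}1$. The key ratio-consistency step, however, is done differently and more cleanly.

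\textbf{The paper's route.} It factors $f(v)(1-f(v))=f(|v|)\,(1-f(|v|))$ and treats the two factors separately. For the second factor it must divide an absolute Lipschitz difference by $f(-|\tilde v_i|)$. This forces it to show $\max_{i\in S_2}|\bX_i^\top\tilde\btheta|=O_{\mathbb P}(\sqrt{\log n})$, and then to absorb a factor $e^{C\sqrt{\log n}}$ into $\sqrt{s\log d\log n/n}$ using the sparsity condition.

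\textbf{Your route.} You observe that $h=\tfrac14\sech^2$ is log-Lipschitz with constant $2$, and compare the two nonnegative weighted sums termwise. This gives $e^{-2\delta_n}\le\widehat V/V\le e^{2\delta_n}$ directly. The fact that conditional probabilities may approach $0$ or $1$ never enters, and only $\delta_n=o_{\mathbb P}(1)$ is needed.

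\textbf{Bounding $\delta_n$.} Your ``conditioning route'' is the same step the paper takes. It treats $\tilde\btheta$ as independent of $\{\bX_i\}_{i\in S_2}$, and that is not exact in this model. $\tilde\btheta$ depends on $\by_{-S}$, and the law of $\by_{-S}$ given $\bX$ involves $\bX_{S_2}$ through the factors $\cosh(m_i(\by)+\bX_i^\top\btheta)$, $i\in S_2$. Your H\"older bound avoids this:
$\delta_n\le\max_i\|\bX_i\|_\infty\|\tilde\btheta-\btheta\|_1=O_{\mathbb P}\bigl(\sqrt{\log(nd)}\,s\sqrt{\log d/n}\bigr)$.
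It uses only the i.i.d.\ marginal law of the covariates and Proposition~\ref{thm:first_step}. It is $o_{\mathbb P}(1)$ under the stated sparsity, because $\log(nd)\le 2\log n\log d$ for $n,d\ge e$. I would make it your primary argument rather than the fallback.

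\textbf{Uniformity over $\btheta$.} The paper never addresses the $\inf_{\btheta}$ in the coverage statement, so your uniformity sketch goes beyond it. A Lindeberg CLT is only pointwise, though, so make the sketch rigorous in one of two ways:
\begin{itemize}
\item use Berry--Esseen, whose conditional error is at most a constant times $\max_{i\in S_2}|\hat\bu^\top\bX_i|/(|S_2|\sqrt V)=O_{\mathbb P}(\sqrt{\log n/n})$, uniformly over the parameter class;
\item or apply the pointwise result along a near-minimizing sequence $\btheta_n$.
\end{itemize}
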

Similar to traditional i.i.d. settings~\cite{belloni2016post,van2014asymptotically}, the confidence interval proposed here is agnostic of true sparsity level and remains valid for sparsity level $o(\sqrt{n})$, ignoring the logarithmic factors. Further, note that if $\|\bc\|_2=O(1)$, then with high probability the length of the confidence interval is $O(n^{-1/2})$, matching with the optimal interval length in the classical i.i.d. response setting. We note that for quadratic functionals, the theoretical justification of the confidence interval $\mathcal{C}_M$ defined by~\eqref{eq:quad_ci} holds by the same argument.

A natural consequence of our result is the validity of hypothesis testing using the test-statistic constructed in Section~\ref{sec:testing}. Consider the hypothesis testing problem
    \begin{align}\label{eq:hypo_Test}
        H_0: \bc^\top \btheta \le 0, \quad \text{vs.} \quad H_1: \bc^\top\btheta = \upsilon n^{-1/2},
    \end{align}
for some $\upsilon >0$. We consider the test which rejects $H_0$ if $\bct \ge z_{\alpha} \sqrt{\widehat{V}}$, where $\bct$ and $\sqrt{\widehat{V}}$ are defined by~\eqref{eq:theta_db} and~\eqref{eq:estimated_var} respectively. We have the following theoretical guarantee for this testing procedure as an immediate consequence of Theorem~\ref{thm:main} and Proposition~\ref{prop:CI}.
    
\begin{prop}\label{prop:testing}
    Suppose the assumptions of Theorem~\ref{thm:main} hold. Consider the hypothesis testing problem given by~\eqref{eq:hypo_Test} for some $0 < \alpha <1$, we have $\lim_{n\rightarrow \infty} \mathbb{P}_{\btheta}(\bct \ge z_{\alpha} \sqrt{\widehat{V}}) \le \alpha$ for any $\btheta \in H_0$. Further, for any $\btheta \in H_1$, we have
    \begin{align*}
        \lim\limits_{n \rightarrow \infty} \left|\mathbb{P}_{\btheta}\left(\bct \ge z_{\alpha} \sqrt{\widehat{V}}\right)- \left\{1- \Phi\left(z_{\alpha}- \frac{\upsilon}{\sqrt{n V}}\right)\right\}\right|= 0.
    \end{align*}
\end{prop}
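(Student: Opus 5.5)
Proposition~\ref{prop:testing} is a direct corollary of Theorem~\ref{thm:main} together with the variance-ratio consistency $\widehat V/V \xrightarrow{\mathbb{P}} 1$. That consistency is established in the proof of Proposition~\ref{prop:CI}: the length statement there is equivalent to $\sqrt{\widehat V/V}\to 1$ in probability, and we take this as given. The plan is to decompose the statistic as
\begin{equation*}
\frac{\bct}{\sqrt{\widehat V}} = \frac{\bct - \bc^\top\btheta}{\sqrt{\widehat V}} + \frac{\bc^\top\btheta}{\sqrt{V}}\sqrt{\frac{V}{\widehat V}}.
\end{equation*}
By Theorem~\ref{thm:main}, the first term converges in distribution to $Z\sim N(0,1)$. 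The second term is then handled separately under $H_0$ and under $H_1$.

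\textbf{Size under $H_0$.} If $\bc^\top\btheta\le 0$, the second summand is nonpositive. Hence
\begin{equation*}
\mathbb{P}_{\btheta}\big(\bct\ge z_\alpha\sqrt{\widehat V}\big)\le \mathbb{P}_{\btheta}\Big(\frac{\bct-\bc^\top\btheta}{\sqrt{\widehat V}}\ge z_\alpha\Big)\to 1-\Phi(z_\alpha).
\end{equation*}
The limit follows from Theorem~\ref{thm:main} and continuity of $\Phi$. Here $z_\alpha$ is to be read as the upper $\alpha$ point, consistent with the rejection rule, so the right-hand side equals $\alpha$.

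\textbf{Power under $H_1$.} Set $\bc^\top\btheta=\upsilon n^{-1/2}$, so the shift is $\mu_n := \upsilon/\sqrt{nV}$, and $\sqrt{V/\widehat V}=1+o_P(1)$. The main obstacle is that $\mu_n$ is random, since $V$ depends on $\bX$, $\by_{-S}$, $\by_{S_1}$, and it need not converge. One therefore cannot simply apply Slutsky's lemma with a deterministic limit. Two ingredients resolve this.

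First, $\mu_n=O_P(1)$. Under the constraints in $\mathcal{A}$ and Assumption~\ref{assn:covariates}, the quantity $nV$ is bounded above and below in probability by multiples of $\|\bc\|_2^2$. This follows from the bounds on $\hat\bu$ used in the proof of Theorem~\ref{thm:main}, and is implicitly assumed by treating $\upsilon$ on the $\|\bc\|_2$ scale. Consequently $\mu_n\big(\sqrt{V/\widehat V}-1\big)=o_P(1)$.

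Second, the convergence in Theorem~\ref{thm:main} must be conditional. Its proof establishes a CLT conditionally on $(\bX,\by_{-S},\by_{S_1})$, via Lindeberg for the independent Bernoulli errors $\varepsilon_i$, $i\in S_2$, with the bias terms being $o_P(1)$. This gives
\begin{equation*}
\sup_t\Big|\mathbb{P}\Big(\tfrac{\bct-\bc^\top\btheta}{\sqrt{\widehat V}}\le t \,\Big|\, \bX,\by_{-S},\by_{S_1}\Big)-\Phi(t)\Big| \xrightarrow{\mathbb{P}} 0,
\end{equation*}
by Pólya's theorem. Since $\mu_n$ is measurable with respect to the conditioning $\sigma$-field, it follows that
\begin{equation*}
\mathbb{P}\big(\text{reject}\mid \cdot\big)-\big\{1-\Phi(z_\alpha-\mu_n)\big\}\xrightarrow{\mathbb{P}} 0,
\end{equation*}
with the $o_P(1)$ perturbations absorbed using the uniform continuity of $\Phi$. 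Taking expectations and applying bounded convergence yields the stated limit, where $\Phi(z_\alpha-\upsilon/\sqrt{nV})$ is interpreted as its expectation over the conditioning variables, or equivalently as stated when $nV$ concentrates.

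I expect this conditioning and uniformity step to be the only real difficulty; the rest follows directly from Slutsky's lemma.
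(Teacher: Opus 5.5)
Your proposal is correct. For the size part it is the Slutsky argument the paper has in mind; the paper gives no separate proof and calls the result an immediate consequence of Theorem~\ref{thm:main} and Proposition~\ref{prop:CI}. You are also right that $z_\alpha$ must be read as the upper $\alpha$-point here, despite the paper's stated notation.

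For the power part your route is more careful than the paper's. The paper implicitly treats $\upsilon/\sqrt{nV}$ as deterministic. You observe that $V$ is random and need not concentrate, so unconditional weak convergence plus $\widehat V/V\to 1$ cannot by itself produce $1-\Phi(z_\alpha-\upsilon/\sqrt{nV})$. You therefore condition on $\mathcal F=\sigma(\bX,\by_{-S},\by_{S_1})$. This works because $\tilde\btheta$, $\hat\bu$, $V$, $\widehat V$ and the entire bias term $\mathcal T_2$ are $\mathcal F$-measurable, while $\mathcal T_1$ is a sum of conditionally independent bounded terms. It gives a rigorous meaning to a statement whose right-hand side is random. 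The cost is that you reach into the proof of Theorem~\ref{thm:main} rather than using only its statement.

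Three small refinements:
\begin{enumerate}
\item You can obtain the uniform conditional approximation directly from Berry--Esseen, with no appeal to P\'olya. On the event $V\gtrsim\|\bc\|_2^2/n$ from \eqref{eq:v_lower_bound}, the Kolmogorov distance is at most a constant times $\max_{i\in S_2}|\hat\bu^\top\bX_i|/(|S_2|\sqrt V)\lesssim\sqrt{\log n/n}$.
\item The step $\mu_n=O_{\mathbb P}(1)$, and with it the implicit restriction on $\upsilon/\|\bc\|_2$, is unnecessary if you normalize by $\sqrt V$ instead of $\sqrt{\widehat V}$. Rejection is then the event $\mathcal T_1/\sqrt V\ge z_\alpha\sqrt{\widehat V/V}-\mathcal T_2/\sqrt V-\mu_n$. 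The $\mathcal F$-measurable threshold differs from $z_\alpha-\mu_n$ by $o_{\mathbb P}(1)$ however large $\mu_n$ is, so Lipschitz continuity of $\Phi$ finishes the argument. This also covers the regime $\upsilon/\|\bc\|_2\to\infty$ discussed after the proposition.
\item The length statement of Proposition~\ref{prop:CI} is only one-sided, so it is not equivalent to $\widehat V/V\to 1$. Cite the two-sided ratio consistency \eqref{eq:hatv_consistency} established in its proof instead.
\end{enumerate}
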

Proposition~\ref{prop:testing} implies that our testing procedure has the desired type-I error. For any $\upsilon>0$, 
the result also provides the precise asymptotic power of the testing procedure. Note that by the definition of $V$ in~\eqref{eq:define_vj}, we have $\sqrt{n V}/ \|\bc\|_2$ both upper and lower bounded. Therefore if we have $\upsilon/ \|\bc\|_2 \rightarrow \infty$, the asymptotic power will be exactly $1$.

The performance of the bias-corrected estimator hinges on the estimation of the regularized pseudo-likelihood based estimator $\tilde \btheta$ which is summarized in the following result.

\begin{prop}\label{thm:first_step}
    Suppose that assumptions of Theorem~\ref{thm:main} hold, and $\tilde\btheta$ is defined by~\eqref{eq:define_theta_isl}. If we select the regularization parameter $\lambda_n = C\sqrt{\log d/n}$ in \eqref{eq:define_theta_isl} for some $C>0$,
\begin{align*}
\|\tilde\btheta- \btheta\|_2 = O\bigg(\sqrt{\frac{s\log d}{n}}\bigg) \; \; \text{and} \; \; \|\tilde\btheta- \btheta\|_1 = O\bigg(s\sqrt{\frac{\log d}{n}}\bigg),
\end{align*}
with probability $1-o(1)$ as $n \to \infty$ and $d \to \infty$ such that $s\sqrt{\frac{\log d}{n}} = o(1)$.
\end{prop}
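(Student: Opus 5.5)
We follow the standard Negahban et al. route for $\ell_1$-penalized M-estimators, working conditionally on $(\by_{-S},\bX)$. Conditionally on these, the $y_i$ with $i\in S_1$ are independent and $L_{S_1}$ is an ordinary (offset) logistic log-likelihood with offsets $m_i(\by)$. By Assumption~\ref{assn:graph}, $|m_i(\by)|\le 1$ deterministically. The covariates $\bX_i$, $i\in S_1$, are i.i.d.; however, $\by_{-S}$ depends on all of $\bX$, so the conditioning must be handled with care. The plan has two ingredients. The first is a deviation bound $\|\nabla L_{S_1}(\btheta)\|_\infty\le\lambda_n/2$. The second is restricted strong convexity (RSC) of $L_{S_1}$ at $\btheta$ over the cone $\mathcal{C}=\{\Delta:\|\Delta_{T^c}\|_1\le 3\|\Delta_T\|_1\}$, with $T=\mathrm{supp}(\btheta)$. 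Given both, the basic inequality gives $\Delta=\tilde\btheta-\btheta\in\mathcal{C}$ and $\kappa_0\|\Delta\|_2^2\lesssim\lambda_n\sqrt s\|\Delta\|_2$. This yields the $\ell_2$ rate $\sqrt{s\log d/n}$, and the $\ell_1$ rate follows from $\|\Delta\|_1\le 4\sqrt s\|\Delta\|_2$, using $|S_1|=\Omega(n)$ from \eqref{eq:size_set}.

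\textbf{Gradient.} We have $\nabla L_{S_1}(\btheta)=-\frac{1}{|S_1|}\sum_{i\in S_1}(y_i-\tanh(v_i))\bX_i$. Conditionally on $(\by_{-S},\bX)$, the summands are independent and mean zero, because $\mathbb{E}[y_i\mid\by_{-S},\bX]=\tanh(v_i)$. They are bounded by $2|X_{ij}|$ in coordinate $j$. Hoeffding's inequality conditional on $\bX$ bounds the $j$-th coordinate by $t$ with probability at least $1-2\exp(-c|S_1|^2t^2/\sum_{i}X_{ij}^2)$. A sub-Gaussian bound with a union over $j$ gives $\max_j\frac1{|S_1|}\sum_{i\in S_1}X_{ij}^2\le 2\kappa$ with high probability, since $\log d=o(n)$. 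A further union bound over $j\in[d]$ then yields $\|\nabla L_{S_1}(\btheta)\|_\infty\lesssim\sqrt{\log d/n}$, and choosing $C$ large completes this step. No assumption on the law of $\by_{-S}$ is needed here.

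\textbf{RSC (main obstacle).} The Hessian of $L_{S_1}$ at $\bb$ is $\frac{1}{|S_1|}\sum_{i\in S_1}\sech^2(m_i+\bX_i^\top\bb)\bX_i\bX_i^\top$. The key difficulty is that the weights can be tiny when $|\bX_i^\top\btheta|$ is large. We control this with truncation, using $|m_i|\le1$ and $\|\btheta\|_2\le C$: on the event $\{|\bX_i^\top\btheta|\le K,\ |\bX_i^\top\Delta|\le K\}$, the weight is at least $\sech^2(1+2K)>0$. This removes all dependence on $\by$, since the lower bound is deterministic in $\bX$. We then establish, uniformly over $\Delta$ in the cone, a bound of the form
\[
\frac{1}{|S_1|}\sum_{i\in S_1}(\bX_i^\top\Delta)^2\mathbf 1\{|\bX_i^\top\btheta|\le K\}\,\mathbf 1\{|\bX_i^\top\Delta|\le K\|\Delta\|_2\}\ \ge\ \kappa_1\|\Delta\|_2^2-\kappa_2\frac{\log d}{n}\|\Delta\|_1^2 .
\]
The argument follows Negahban et al.'s GLM RSC proof, or Mukherjee et al.'s network-logistic version. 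The ingredients are as follows. The population version is bounded below using $\lambda_{\min}(\bSigma)\ge1/\kappa$, together with sub-Gaussian tails, which make the truncation lose only a small fraction of the mass for $K$ large. Uniformity is obtained via a peeling argument with Lipschitz surrogates for the indicators, combined with Rademacher symmetrization and Talagrand contraction.

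Because the $\bX_i$ are i.i.d., this bound holds unconditionally, and hence also jointly with everything else. The second-order Taylor remainder is handled in the usual way. Restricting first to $\|\Delta\|_2\le r$ for a small constant $r$ gives a localized RSC. Then convexity of $L_{S_1}$ shows that if the error exceeded $r$, the same argument applied at the rescaled point on the sphere of radius $r$ would give a contradiction. This is valid because $s\sqrt{\log d/n}=o(1)$, which forces $\kappa_2\frac{\log d}{n}\|\Delta\|_1^2\le 16\kappa_2 s\frac{\log d}{n}\|\Delta\|_2^2=o(\|\Delta\|_2^2)$. Combining the gradient bound with RSC in the basic inequality completes the proof.
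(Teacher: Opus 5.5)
Your proposal is correct and follows essentially the same route as the paper: a conditional Hoeffding-plus-union bound for $\|\nabla L_{S_1}(\btheta)\|_\infty$ on the good event $\max_k \frac{1}{n}\sum_i X_{ik}^2\lesssim \lambda_{\max}(\bSigma)$, and restricted strong convexity obtained by using $|m_i(\by)|\le 1$ to make the $\sech^2$ weights depend on $\bX$ alone, followed by the Negahban/Mukherjee et al.\ argument (which the paper simply cites as \cite[Proposition 19]{mukherjee2024logistic}), all combined through the standard $\ell_1$-penalized M-estimator bound. Your explicit truncation and convexity-based localization only spell out what the paper delegates to \cite[Corollary 9.20]{Wainwright_2019}.
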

Our result achieves the same estimation error rate as the Lasso estimator in classical high-dimensional settings~\citep{buhlmann2011statistics}. The key distinctions between Proposition~\ref{thm:first_step} and~\cite[Theorem 2.9]{mukherjee2024logistic} lie in the model and estimation framework: our analysis accommodates general hypergraph-induced dependence and sub-Gaussian covariates, thereby broadening the applicability of the result. Moreover, to enable bias correction, we employ a maximum pseudolikelihood estimator constructed on a strong independent set. These features make our proof different from that of~\cite[Theorem 2.9]{mukherjee2024logistic}, and we provide the proof of Proposition~\ref{thm:first_step} in the Appendix.

\section{Simulations}\label{sec:sims}

In this section, we complement our theoretical findings with numerical examples from three simulations. In each simulation, the underlying network is given by a graph $\bG_n$ whose connectivity tensor is its adjacency matrix $\bA(\bG_n)$. The first example considers $\bG_n$ to be the 2-dimensional lattice on $\sqrt{n} \times \sqrt{n}$ vertices, with $\bA_n= \frac{1}{4} \bA(\bG_n)$ to be the scaled adjacency matrix of $\bG_n$. We observe $(y,\bX)$ such that 
\begin{equation}\label{eq:def_ising}
    \mathbb{P}_{\theta}(\by|\bX) \propto \exp \left(\beta \by^\top \bA_n \by + \sum_{i=1}^{n} y_i \bX^\top_i \btheta\right).
\end{equation}
Since the maximum degree of this graph is $4$, it satisfies our Assumption~\ref{assn:graph} for any $\beta$. The parameter $\beta>0$, known as the ``inverse temperature", captures the strength of network effects. The regression parameter $\btheta \in \mathbb{R}^d$ is $s$- sparse, and we set $\theta_1=\ldots=\theta_s=1$. Our observed covariates are drawn as $\bX_i \sim N(0, \bSigma)$, where $\bSigma$ is the autocovariance matrix with $\Sigma_{ij}= 0.2^{|i-j|}$. To generate the observation $\by$, we use Gibbs sampling with $2000$ iterations, following the procedure given in ~\cite{mukherjee2024logistic}. We consider the network induced by the $40 \times 40$ lattice, i.e. $n=1600$, with $d=100$ and $s=5$. For varying network strength, we choose $\beta \in \{0.1,0.15,0.2,0.25,0.3\}$. We select a non-zero component (here, we selected $\theta_2$) and construct a $95\%$ confidence interval as described by our method~\eqref{eq:define_conf_int}. As no direct comparison methods exist for this problem, we benchmark against the confidence interval for high-dimensional logistic regression provided by~\cite{cai_debiased_clt}, which does not consider network effects among the responses. We report the coverage proportion of both confidence intervals over $100$ iterations in Table~\ref{tab:tab1}. Ignoring the network effects leads to severe under-coverage, as reflected in the third column. One might wonder whether the coverage of our method simply arises from wider intervals; this is not the case. For $\beta=0.2$ in the above setup, the maximum length of the confidence intervals is $0.59$ with the median length being $0.47$. We note that the lengths are similar to the ones obtained by the baseline method ignoring network effects. Theoretical bounds on interval length were established earlier in Proposition~\ref{prop:CI}.

\begin{table}[ht]
 \begin{center}
	\begin{small}
		\begin{tabular}{ p{4cm} p{4cm}  p{4 cm} }
			\toprule
			Network strength & Proposed method & Baseline method\\
			\midrule
0.1   &  0.97  &  0.26  \\
0.15   &  0.98  &  0.27  \\
0.2  &  0.98  &   0.24 \\
0.25   &  0.98 &  0.22  \\
0.3   &  0.99  &  0.22  \\			
			\bottomrule
		\end{tabular}
	\end{small}
    \caption {Coverage proportion of a randomly chosen non-zero coordinate of $\btheta$ for varying $\beta$. We construct a $95\%$ confidence interval based on~\eqref{eq:define_conf_int}. Note the stark difference in coverage if the confidence interval for standard high-dimensional logistic regression without network effects is used.} \label{tab:tab1}
  \end{center}
\end{table}

Our second example examines the validity of our confidence interval across different network structures. We set $\bG_n$ to be uniformly chosen $\Delta$- regular graph with varying $\Delta=\{4,\ldots, 8\}$. Define $\bA_n= \frac{1}{\Delta}\bA(\bG_n)$. We set $\beta=0.2$, with all other parameters as in the previous example. We construct $95\%$ confidence interval using our proposed method and report the coverage proportion in Table~\ref{tab:tab2} along with coverage proportion of baseline method. Note that, we have constructed the independent set using greedy algorithm to ensure it has size $\lfloor n\Delta \rfloor$. The overall picture mirrors that of the first example: our method achieves the desired coverage, while ignoring peer effects yields poor uncertainty quantification.

\begin{table}[ht]
 \begin{center}
	\begin{small}
		\begin{tabular}{ p{4cm} p{4cm}  p{4 cm} }
			\toprule
			Network-degree & {\small Proposed method} & {\small Baseline method}\\
			\midrule
4   &  0.97  &  0.18  \\
5   &  0.94  &  0.17  \\
6  &  0.94  &   0.27 \\
7   &  0.94 &  0.20  \\
8   &  0.95  &  0.24  \\			
			\bottomrule
		\end{tabular}
	\end{small}
    \caption {Coverage proportion of a randomly chosen non-zero coordinate of $\btheta$ for different underlying network. We consider a random regular graph with all vertices having degree $\Delta$. We vary $\Delta$, as denoted in the first column. As before, we notice that ignoring network dependence has severe under-coverage.} \label{tab:tab2}
  \end{center}
\end{table}

Our final example studies the effect of $d$, the dimension of $\btheta$. We set $\bG_n$ to be the two-dimensional lattice of the first example with $\beta=0.2$. Here, we vary $d= \{100,125,150, 175, 200\}$, with the first $s$ coordinates of $\btheta$ to be non-zero throughout. We select a non-zero coordinate of $\btheta$ and construct $95\%$ confidence interval, with all other parameters as in the first example. Our coverage of the interval is reported in Table~\ref{tab:tab3} where proportion is constructed over $100$ iterations of the simulation. Our method consistently achieves the desired coverage, whereas ignoring the peer effect leads to increasingly poor coverage as $d$ grows.

\begin{table}[ht]
 \begin{center}
	\begin{small}
		\begin{tabular}{ p{4cm} p{4cm}  p{4 cm} }
			\toprule
			d & {\small Proposed method} & {\small Baseline method}\\
			\midrule
100   &  0.98  &  0.24  \\
125   &  0.98  &  0.16  \\
150   &  0.97  &  0.14  \\
175   &  0.97  &  0.14  \\
200  &  0.96  &   0.19 \\		
			\bottomrule
		\end{tabular}
	\end{small}
    \caption {Coverage proportion of a randomly chosen non-zero coordinate of $\btheta$ for varying dimension of regression parameter, as denoted in the first column. We set $2$-dimensional lattice as the underlying network. The result shows that our proposed method yields valid coverage throughout each setup.} \label{tab:tab3}
  \end{center}
\end{table}

\section{Discussion and future directions}\label{sec:discussion}

Our work develops a statistical inference framework for logistic regression in the presence of network dependence, where the dependence structure is induced by a hypergraph. Moving beyond consistent estimation problems in Markov random fields with pairwise (quadratic) interactions, we establish inferential guarantees for both linear and quadratic functionals in tensor MRFs. Our results contribute to the broader literature on inference in dependent data settings and suggest several promising directions for future research. We highlight three such directions below.

First, our current framework focuses on sparse networks, which facilitates the identification of strong independent sets. An important open question is how to extend these ideas to dense graphs. Although estimation and testing problems in dense Ising models have been studied extensively in the past decade~\cite{bhattacharya2025sharp,mukherjee2018global,mukherjee2022testing}, these works typically do not incorporate high-dimensional covariates. Developing inference procedures that simultaneously account for dense dependence structures and high-dimensional covariates remains an interesting challenge. Second, the estimation of causal effects under network interference has attracted considerable recent attention~\cite{tchetgen2021auto, bhattacharya2025causal}. These works, however, focus on settings without high-dimensional covariates. Our proposed framework naturally suggests a way to incorporate high-dimensional predictors and may provide inferential guarantees for both direct and spillover effects in such causal inference problems. Extending our bias-correction and projection techniques to this setting would be an important contribution. Finally, we have restricted our analysis to binary data. It would be of substantial interest to extend our framework to general network-assisted generalized linear models. In particular, it remains to investigate how the construction of the projection vector (cf.~\eqref{eq:def-uhat}) can be generalized to accommodate more general responses and link functions beyond the logistic. Such extensions would further broaden the applicability of our approach to a wide range of network-dependent data structures.

\bibliography{references}

\section*{Appendix}
This appendix contains the proofs of our theoretical results. We will first prove our main result, i.e. asymptotic normality of debiased estimator.

\subsection*{Proof of Theorem~\ref{thm:main}}
In this section, we will prove the asymptotic distribution of the bias-corrected estimator $\bct$. Following the decomposition~\eqref{eq:target_debias}, and recalling that $w_i=2$ using~\eqref{eq:define_w}, we obtain that our estimator satisfies 
\begin{align}\label{eq:define_an_bn}
    &\bct - \bc^\top \btheta = \underbrace{\frac{2}{|S_2|} \sum_{i \in S_2} \epsilon_i \hat\bu^\top \boldsymbol X_i}_{=:\mathcal{T}_1} \nonumber \\ &+ \underbrace{\Bigg\{\bigg[\Big(\frac{2}{|S_2|} \sum_{i \in S_2} f'(\tilde v_i) \hat\bu^\top \bX_i \bX_i^\top\Big) - \bc^\top\bigg](\btheta-\tilde\btheta)\Bigg\} + \frac{2}{|S_2|}\sum_{i \in S_2} \mathcal{R}_i \hat\bu^\top \bX_i}_{=:\mathcal{T}_2}. 
\end{align}
We will show that $\mathcal{T}_1$ converges weakly to a normal distribution with appropriate variance and $\mathcal{T}_2$ is small in probability under our sparsity assumptions on $\btheta$. The precise claims are given as follows:
\begin{enumerate}
    \item[(i)] $\frac{\mathcal{T}_1}{\sqrt{\widehat{V}}}\xrightarrow{D} Z$, where $Z \sim N(0,1)$ and $\widehat V$ is defined by~\eqref{eq:estimated_var}.
    \item[(ii)] The projection vector $\hat\bu$ defined by~\eqref{eq:def-uhat} satisfies
    \begin{align*}
        &\bigg\|\Big(\frac{2}{|S_2|} \sum_{i \in S_2} f'(\tilde v_i) \hat\bu^\top \bX_i \bX_i^\top\Big) - \bc^\top\bigg\|_{\infty} =O_{\mathbb{P}}(\|\bc\|_2\lambda_n), \\ 
        &\bigg|\Big(\frac{2}{|S_2|} \sum_{i \in S_2} f'(\tilde v_i) \hat\bu^\top \bX_i \bX_i^\top \bc\Big) - \|\bc\|^2_2\bigg| =O_{\mathbb{P}}(\|\bc\|^2_2\lambda_n), \max_{i \in S_2} |\hat\bu^\top \bX_i| = O_{\mathbb{P}}(\|\bc\|_2 \sqrt{\log n}),
    \end{align*}
    where $\tilde v_i$ is defined by~\eqref{eq:define_vi}.
    \item[(iii)]  \[ \left|\frac{1}{|S_2|}\sum_{i \in S_2} \mathcal{R}_i \hat\bu^\top \bX_i \right| =O_{\mathbb{P}}\left( \|\bc\|_2\frac{s \log d \sqrt{\log n}}{n}\right).\]
\end{enumerate}
Assuming the three claims are true, we immediately have, using $\lambda_n= O(\sqrt{\frac{\log d}{n}})$,
\begin{align*}
    |\mathcal{T}_2| &\le \bigg\|\Big(\frac{2}{|S_2|} \sum_{i \in S_2} f'(\tilde v_i) \hat\bu^\top \bX_i \bX_i^\top\Big) - \bc^\top\bigg\|_{\infty} \|\tilde \btheta- \btheta\|_1 + O_{\mathbb{P}}\left( \|\bc\|_2\frac{s \log d \sqrt{\log n}}{n}\right) \\
    &= O_{\mathbb{P}}(\|\bc\|_2 s \lambda^2_n)+ O_{\mathbb{P}}\left(\|\bc\|_2 \frac{s \log d \sqrt{\log n}}{n}\right) = O_{\mathbb{P}}\left(\|\bc\|_2 \frac{s \log d \sqrt{\log n}}{n}\right) \\
    &=o_{\mathbb{P}}\left(\frac{\|\bc\|_2}{\sqrt{n}}\right)  =o_{\mathbb{P}}\left(\sqrt{\widehat V}\right),
\end{align*}
where the first equality above follows from the bound on $\|\tilde\btheta- \btheta\|_1$ given by Proposition~\ref{thm:first_step}, the third equality by our sparsity assumption in the statement of the theorem, and the final equality by ~\eqref{eq:hatv_consistency} and~\eqref{eq:v_lower_bound} . Hence, we have the desired conclusion. Now we turn to the proof of three claims above.

\noindent \textbf{Proof of $(i)$:}

Define $\zeta_i = 2 V^{-1/2} \epsilon_i \hat{\boldsymbol u}^\top \boldsymbol X_i$, $V$ is defined by~\eqref{eq:define_vj}. Conditional on $\bX_{-S_2}$, the random variables $\{\zeta_i\}_{i \in S_2}$ are independent mean $0$ , and
\begin{align*}
    s_n^2 &:= \sum_{i \in S_2} \text{Var}[\zeta_i] = \sum_{i \in S_2} \frac{\big(2\hat{\boldsymbol u}^\top \boldsymbol X_i\big)^2}{V}f(v_i)\big(1-f(v_i)\big) \\
    &= \frac{1}{V}  \Big[\sum_{i \in S_2}4 (\hat\bu^\top \boldsymbol X_i)^2 f(v_i)\big(1-f(v_i)\big)\Big] = |S_2|^2,
\end{align*}
where $V$ is defined by~\eqref{eq:define_vj}. To obtain the asymptotic normality, it suffices now to show the Lindeberg condition for the sequence $\{\zeta_i\}$, namely that for any $\gamma>0$,
\begin{align*}
    \lim\limits_{n \to \infty} \frac{1}{|S_2|^2} \sum_{i \in S_2} \mathbb{E} \big[\zeta_i^2 \mathbbm{1}_{|\zeta_i| > \gamma |S_2|}\big]=0.
\end{align*}
In fact, we will show the following: there exists $C_1 > 0$ such that $V \ge C_1  \|\bc\|^2_2/n$. Then, we would have
\begin{align*}
    \max_i|\zeta_i| \lesssim \frac{\sqrt{n}}{\|\bc\|_2}\max_i|\hat \bu^\top \bX_i| = O\Big(\sqrt{n \log n}\Big) = o(n),
\end{align*}
where the first inequality uses $|\varepsilon_i| \le 1$,  and the final inequality uses that $\hat \bu \in \mathcal{A}$ defined by~\eqref{eq:constrained set}. This implies the Lindeberg condition mentioned above immediately proving our claim. 

The remaining part of this subsection is devoted to providing a lower bound for $V$ which holds with high probability. To this end, define
\begin{align*}
    \hat{\boldsymbol \Gamma}:= \frac{2}{|S_2|} \sum_{i \in S_2} f'(\tilde v_i) \bX_i \bX_i^\top, \quad 
    \tilde V := \frac{1}{|S_2|}\hat{\boldsymbol u}^\top\hat{\boldsymbol\Gamma}\hat{\boldsymbol u}.
\end{align*}
With this definition, we plan on showing that $\frac{\tilde V}{V} \stackrel{\mathbb{P}}{\to} 1$. Recall that $f'(x)= 2 f(x)(1-f(x))$ from our definition~\eqref{eq:define_w}. Therefore, it suffices to show that
\begin{align*}
    \max_{i \in S_2}\left|\frac{f(\tilde v_i)\big(1-f(\tilde v_i)\big)}{f(v_i)\big(1-f(v_i)\big)}-1\right| \stackrel{\mathbb{P}}{\to} 0.
\end{align*}
Given consistency of $\tilde \btheta$, the proof of this claim in the above display is included in the proof of Proposition~\ref{prop:CI}. The consistency of $\tilde \btheta$ is shown already in Proposition~\ref{thm:first_step}. Hence to provide a lower bound for $V$, it is enough to provide a lower bound for $\tilde V$.

Turning to yielding a lower bound on $\hat\bu^\top \hat {\boldsymbol{\Gamma}} \hat\bu$, define the quantity
\begin{align}\label{eq:utilde}
    \tilde{\boldsymbol u} := \argmin_{\boldsymbol u \in \mathbb R^d} \boldsymbol u^\top\hat{\boldsymbol \Gamma} \boldsymbol u, \qquad
    \text{subject to} \qquad \Big|\bc^\top \hat{\boldsymbol \Gamma}\boldsymbol u - \|\bc\|^2_2\Big| \leq C\|\bc\|^2_2\lambda_n.
\end{align}
for some large $C>0$. Since our projection vector $\hat \bu$ defined by~\eqref{eq:def-uhat} lies in the feasible set defined in the above display, we have for any $t \geq 0$,
\begin{align}\label{eq:lemma-3-1}
    \hat{\boldsymbol u}^\top\hat{\boldsymbol \Gamma} \hat{\boldsymbol u} \geq \tilde{\boldsymbol u}^\top\hat{\boldsymbol \Gamma} \tilde{\boldsymbol u} \geq \tilde{\boldsymbol u}^\top\hat{\boldsymbol \Gamma} \tilde{\boldsymbol u} + t\big((1-\lambda_n)\|\bc\|^2_2 - \bc^\top\hat{\boldsymbol\Gamma}\tilde{\boldsymbol u}\big),
\end{align}
\noindent where the first inequality follows from the definition of $\tilde{\boldsymbol u}$, and the second inequality follows from the constraint \eqref{eq:utilde}. Therefore we have
\begin{align}\label{eq:lemma-3-2}
    \tilde{\boldsymbol u}^\top\hat{\boldsymbol \Gamma} \tilde{\boldsymbol u} + t\big((1-C\lambda_n)\|\bc\|^2_2 - \boldsymbol c^\top\hat{\boldsymbol\Gamma}\tilde{\boldsymbol u}\big) &\geq \min\limits_{u \in \mathbb R^d} \boldsymbol u^\top\hat{\boldsymbol \Gamma} \boldsymbol u + t\big((1-C\lambda_n)\|\bc\|^2_2 - \boldsymbol c^\top\hat{\boldsymbol\Gamma}\boldsymbol u\big) \nonumber \\
    &= -\frac{t^2}{4}\boldsymbol c^\top\hat{\boldsymbol \Gamma}\boldsymbol c + t(1-C\lambda_n)\|\bc\|^2_2.
\end{align}

\noindent Combining \eqref{eq:lemma-3-1} and \eqref{eq:lemma-3-2}, we have that $\hat{\boldsymbol u}^\top\hat{\boldsymbol \Gamma} \hat{\boldsymbol u} \geq -\frac{t^2}{4}\bc^\top\hat{\boldsymbol \Gamma}\bc + t(1- C \lambda_n)\|\bc\|^2_2$ for all $t \geq 0$, and therefore
\begin{align*}
    \hat{\boldsymbol u}^\top\hat{\boldsymbol \Gamma} \hat{\boldsymbol u} \geq \max\limits_{t \geq 0} \Big[-\frac{t^2}{4}\bc^\top\hat{\boldsymbol \Gamma}\bc + t(1-C\lambda_n)\|\bc\|^2_2\Big] = \frac{(1-C\lambda_n)^2 \|\bc\|^4_2}{\bc^\top\hat{\boldsymbol \Gamma}\bc}.
\end{align*}
This provides a lower bound on $\hat{\boldsymbol u}^\top\hat{\boldsymbol \Gamma} \hat{\boldsymbol u}$. Next, we will use the bound to further provide a lower bound on $\tilde V$. Define
\begin{equation}\label{eq:define_gamma}
    \boldsymbol{\Gamma}= \frac{2}{|S_2|} \sum_{i \in S_2} \mathbb{E} \Big[ f'(\tilde v_i) \bX_i \bX^\top_i\Big] 
\end{equation}
Note that, conditional on $S_1$,
\begin{align*}
    \boldsymbol c^\top\hat{\boldsymbol \Gamma}\boldsymbol c - \boldsymbol c^\top\boldsymbol\Gamma\boldsymbol c = \frac{2}{|S_2|} \sum_{i \in S_2} \left\{\boldsymbol c^\top  f'(\tilde v_i) \boldsymbol X_i\boldsymbol X_i^\top\boldsymbol c -  \mathbb E\big[\boldsymbol c^\top f'(\tilde v_i) \boldsymbol X_i\boldsymbol X_i^\top \boldsymbol c\big]\right\},
\end{align*}
so that $\boldsymbol c^\top\hat{\boldsymbol \Gamma}\boldsymbol c - \boldsymbol c^\top\boldsymbol\Gamma\boldsymbol c \stackrel{P}{\to} 0$ by the weak law of large numbers. Moreover, since $\boldsymbol\Gamma$ is positive definite by~\eqref{eq:Gamma_pos_def} with minimum eigenvalue bounded away from $0$, 
we obtain $\frac{\boldsymbol c^\top\hat{\boldsymbol\Gamma}\boldsymbol c}{\boldsymbol c^\top\boldsymbol\Gamma\boldsymbol c} \stackrel{P}{\to} 1$. Since $\lambda_n \to 0$ by our sparsity assumption of the theorem, for $d, n$ large enough and small $\epsilon>0$,
\begin{align*}
    \hat{\boldsymbol u}^\top\hat{\boldsymbol \Gamma} \hat{\boldsymbol u} \geq \frac{(1-C\lambda_n)^2 \|\bc\|^4_2}{\boldsymbol c^\top\hat{\boldsymbol\Gamma}\boldsymbol c} > \frac{(1-\epsilon)(1-C\lambda_n)^2\|\bc\|^4_2}{\boldsymbol c^\top\boldsymbol\Gamma\boldsymbol c} > \frac{0.99 \|\bc\|^4_2 }{\boldsymbol c^\top\boldsymbol\Gamma\boldsymbol c} \geq \frac{0.99 \|\bc\|^2_2}{\lambda_{\text{max}}(\boldsymbol\Gamma)}
\end{align*}
\noindent with probability converging to 1. 
Since $\tilde V = |S_2|^{-1}\hat{\boldsymbol u^\top}\hat{\boldsymbol\Gamma}\hat{\boldsymbol u}$, it follows that for $ n$ large enough, 
\begin{equation}\label{eq:v_lower_bound}
    V > \frac{0.99  \|\bc\|^2_2}{\lambda_{\text{max}}(\boldsymbol\Gamma) |S_2|} \ge C_1  \|\bc\|^2_2/n
\end{equation}
with high probability. Here, we have used the fact $\|f'\|_{\infty}\le 2$, and $\lambda_{\max}(\bSigma)= O(1)$ to obtain $\lambda_{\text{max}}(\boldsymbol\Gamma)=O(1)$. This proves the required lower bound of $V$, completing the proof of $(i)$.

\noindent \textbf{Proof of $(ii)$:}

The conclusion will follow from the definition of $\hat \bu$ once we show such a projection vector $\hat\bu$ exists. It suffices to show that conditioned on $\bX_{-{S_2}}$, the matrix $ \boldsymbol \Gamma$ defined by~\eqref{eq:define_gamma}
is invertible and $\bu^\star= \boldsymbol \Gamma^{-1}\bc$ is feasible for the constraints
\begin{align}
        &\Big\|\Big(\frac{1}{|S_2|}\sum_{i \in S_2} f'(\tilde v_i)\bu^{\star^\top}\bX_i\bX_i^\top\Big) - \boldsymbol c^\top\Big\|_\infty = O(\|\bc\|_2\lambda_n), \quad 
        \max\limits_{i \in |S_2|} \big|\bX_i^\top \bu^\star\big| = O(\|\bc\|_2 \sqrt{\log n}), \nonumber\\
        &\Big|\Big(\frac{1}{|S_2|}\sum_{i \in S_2} f'(\tilde v_i)\bu^{\star^\top}\bX_i\bX_i^\top\bc\Big) - \|\bc\|^2_2\Big| = O(\|\bc\|^2_2\lambda_n). \label{eq:feasibility-2}
    \end{align}
with probability at least $1- C_1 n^{-2}$.

To show invertibility of the matrix $\boldsymbol \Gamma$, note that $f'>0$. Hence, we want to show that for all $\boldsymbol z \in \mathbb R^d$ such that $\|\boldsymbol z\|_2=1$, there exists some $\delta>0$ such that
\begin{align}\label{eq:Gamma_pos_def}
    \boldsymbol{z^\top\Gamma z} = \frac{2}{|S_2|} \sum_{i \in S_2} \mathbb{E} \Big[ f'(\tilde v_i) (\bz^\top\bX_i)^2\Big] >2 \delta.
\end{align}
Rewriting \eqref{eq:Gamma_pos_def} as the double of the average of
\begin{align*}
    \mathbb E\big[f'(\tilde v_i) (\boldsymbol{X_i^\top z})^2\big] = \mathbb E\big[f'(v_i)(\boldsymbol X_i^\top \boldsymbol z)^2\big] - \mathbb E\Big[\big(f'(v_i) - f'(\tilde v_i)\big)(\boldsymbol X_i^\top \boldsymbol z)^2\big],
\end{align*}
it is enough to show that
\begin{align}
    &\mathbb E\big[f'(v_i)(\boldsymbol X_i^\top \boldsymbol z)^2\big] > 2\delta, \quad \mathbb E\Big[\big(f'(v_i) - f'(\tilde v_i)\big)(\boldsymbol X_i^\top \boldsymbol z)^2\Big] < \delta. \label{eq:pos_def_steps}
\end{align}
for each $i \in S_2$. To this end, note that for any $T>0, \mathbb E[f'(v_i)(\boldsymbol X_i^\top \boldsymbol z)^2] \geq E\big[f'(v_i)(\boldsymbol X_i^\top \boldsymbol z)^2 \mathbbm{1}\{|v_i| < T\}\big]$. Notice that $f'$ is positive, symmetric around $0$ and strictly decreasing for $x>0$. Therefore, there exists $C>0$ such that $f'(x) \ge C$ for any $|x|<T$. Hence,

\begin{align*}
    \mathbb E[f'(v_i) (\boldsymbol{X_i^\top z})^2] &\geq C \mathbb E[(\boldsymbol{X_i^\top z})^2 \mathbbm{1}\{|v_i|<T\}] \\
    &= C\Big(\mathbb E[(\boldsymbol{X_i^\top z})^2] - \mathbb E[(\boldsymbol{X_i^\top z})^2 \mathbbm{1}\{|v_i| \geq T\}]\Big) \\
    &\geq C_1 -C_2 \Big[\mathbb P\big(|v_i| \geq T\big)\Big]^{1/2} \\
    &= C_1 -C_2 \Big[\mathbb P\big(|m_i(\by)+ \bX^\top_i \btheta| \geq T\big)\Big]^{1/2}\\
    & \geq C_1 - C_2\sqrt{2 \mathbb P\big(|\boldsymbol{X_i^\top\theta}| \geq T - |m_i(\by)|)} \\
    &\geq 2\delta,
\end{align*}
 for large $T>0$, since $\bX^\top_i \btheta$ is sub-Gaussian random variable and $|m_i(\by)|$ is bounded. This yields the first inequality in~\eqref{eq:pos_def_steps}. To establish the second inequality in \eqref{eq:pos_def_steps}, observe that $(f'(\cdot))^2$ is Lipschitz continuous yielding
\begin{align*}
    \Big|\mathbb E\big[f'(v_i)-f'(\tilde v_i)\big]\big(\boldsymbol{z^\top X_i})^2\big]\Big| &\lesssim \sqrt{\mathbb E\Big[\big(\boldsymbol X_i^\top(\boldsymbol\theta-\tilde{\boldsymbol\theta})\big)^2\Big]} \sqrt{\mathbb E\big[(\boldsymbol{z^\top X_i})^4\big]} \\
    & \lesssim \sqrt{\lambda_{\text{max}}(\boldsymbol{\Sigma}) \|\tilde{\boldsymbol\theta}-\boldsymbol\theta\|_2^2} \\
    &\lesssim \sqrt{\frac{s \log d}{n}}.
\end{align*}
Here, the first inequality is Cauchy-Schwarz, second inequality follows from Assumption \ref{assn:covariates}, and the final inequality from Proposition~\ref{thm:first_step}). This establishes the second inequality in \eqref{eq:pos_def_steps} and thus shows $\boldsymbol{\Gamma}$ is invertible. 

Turning to the proof of~\eqref{eq:feasibility-2}, set $\bu^* \in \mathbb{R}^d$ such that $\bu^\star = \boldsymbol\Gamma^{-1} \bc$. Substituting the value of $\boldsymbol\Gamma$ provides that
\begin{align*}
    (\bu^*)^\top \frac{2}{|S_2|} \sum_{i \in S_2} \mathbb E[f'(\tilde v_i) \boldsymbol{X_iX_i^\top}] = \boldsymbol c^\top.
\end{align*}
Letting $\alpha_{ik}:= f'(\tilde v_i) (\bu^*)^\top \boldsymbol{X_iX_i}^\top\boldsymbol e_k$, it follows that $\mathbb E[\alpha_{ik}] = c_k$. Since the $\{\boldsymbol X_i\}_{i \in [n]}$ are sub-Gaussian, and $(f')^2$ is uniformly bounded above by a constant, it follows that $\alpha_{ik}$ is a sub-exponential random variable for all $(i,k) \in [n] \times [d]$, and this implies that $(\bu^*)^\top f'(\tilde v_i) \boldsymbol{X_iX_i^\top} - \boldsymbol c^\top$ is a centered sub-exponential random vector. By applying a Bernstein-type concentration inequality for sub-exponential variables (e.g. \cite[Proposition 5.16]{vershynin2010introduction}), it follows that for all $C > 0$,
\begin{align*}
    \mathbb P\Big(\Big|\frac{2}{|S_2|}\sum_{i \in S_2}\big((\bu^*)^\top f'(\tilde v_i) \boldsymbol{X_iX_i^\top} - \boldsymbol c^\top\big)_k\Big|\geq C\sqrt{\frac{\log d}{n}}\Big) \leq 2 \exp\Big[-C_2 \log d \Big],
\end{align*}
By a union bound, we obtain that with probability $1-d^{-c}$ for some $c>0$, we have 
\begin{align*}
    \Big\|\frac{2}{|S_2|}\sum_{i \in S_2}\big((\bu^*)^\top f'(\tilde v_i) \boldsymbol{X_iX_i^\top} - \boldsymbol c^\top\big)\Big\|_\infty \le  \lambda_n
\end{align*}
A similar concentration bound provides the second constraint. Moreover, by~\eqref{eq:Gamma_pos_def}, we know that $\lambda_{\max}(\boldsymbol{\Gamma}^{-1})=O(1)$, implying $\|\bu^\star\|_2=O(\|\bc\|_2)$. Hence $\bX^\top_i \bu^\star$ are i.i.d. sub-Gaussian random variables with sub-Gaussian norm $O(\|\bc\|_2)$. Hence, with probability $1- n^{-c}$ we, 
\[
\max_{i \in S_2} |\bX^\top_i \bu^\star| \lesssim \|\bc\|_2\sqrt{\log n}.
\]
Hence $u^*$ satisfies the constraints in (ii),  completing the proof of~\eqref{eq:feasibility-2}.

\noindent \textbf{Proof of $(iii)$:}

We begin by noting that $|\mathcal{R}_i| \le 8 (\bX^\top_i(\btheta- \tilde\btheta))^2$, where $\mathcal{R}_i$ is defined by~\eqref{eq:TS1}. Hence we have,
\begin{align*}
    \left|\frac{1}{|S_2|}\sum_{i \in S_2} \mathcal{R}_i \hat\bu^\top \bX_i \right| &\lesssim \max_{i \in S_2}|\hat\bu^\top \bX_i| \frac{1}{|S_2|}\sum_{i \in S_2}  (\btheta- \tilde\btheta) \bX_i\bX^\top_i(\btheta- \tilde\btheta) \\
    &\lesssim \|\bc\|_2 \sqrt{\log n} \|\btheta- \tilde\btheta\|^2_2 \\
    &= O\left(\|\bc\|_2\frac{s \log d \sqrt{\log n}}{n}\right),
\end{align*}
where the second inequality follows from~\eqref{eq:constrained set} and the fact that $\bSigma$ has bounded eigenvalues (Assumption~\ref{assn:covariates}), and final equality follows from the consistency of $\tilde \btheta$ (Proposition~\ref{thm:first_step}).

\subsection*{Proof of Proposition~\ref{thm:first_step}}

We begin by stating the following definition of Rademacher complexity common in high-dimensional statistics:  Given the covariates $\bX_1, \ldots, \bX_n$, define the Rachemacher complexity $\mathscr{R}_n$ as 
\begin{equation}\label{eq:defn_rademacher}
    \mathscr{R}_n:= \mathbb{E}\left(\left\|\frac{1}{n} \sum_{i=1}^n \varepsilon_i \bX_i \right\|_{\infty}\right),
\end{equation}
where $\varepsilon_1,\ldots, \varepsilon_n$ are i.i.d. Rademacher random variables. Note that the expectation above is taken with respect to the randomness of $\varepsilon_i$'s and $\bX_i$'s together. 

\noindent Recall the definition of pseudo-likelihood on an independent set $S_1$ given by~\eqref{eq:define_LS1}. The proof of Proposition~\ref{thm:first_step} consists of two key steps: 

i. provide high probability upper bound for the $\ell_\infty$ norm of the gradient $\|\nabla L_{S_1}(\btheta)\|_{\infty}$ (Lemma~\ref{lemma:grad_conc}), 

ii. prove that log-likelihood $L_{S_1}$ is restricted strong convex with high probability (Lemma~\ref{lemma:concavity}).

\noindent Given these two results, the conclusion of the proposition follows by invoking \cite[Corollary 9.20]{Wainwright_2019} (Corollary 9.20) in the following manner: Set $\lambda_n= C \sqrt{\frac{\log d}{n}}$. Since $\bX_i$'s are i.i.d. sub-Gaussian random variables, we know~\cite[Exercise 9.8]{Wainwright_2019} that the Rademacher complexity satisfies $\mathscr{R}_n= O(\sqrt{\frac{\log d}{n}})$. Therefore, under the assumptions of the Proposition~\ref{thm:first_step}, we have $s \mathscr{R}_n= o(1)$. Hence, if $\|\nabla L_{S_1}(\btheta)\|_{\infty} \le \frac{\lambda_n}{2}$, we have 
\begin{align*}
    \|\tilde\btheta-\btheta\|_2^2 \lesssim s\lambda^2_n \lesssim s\frac{\log d}{n} \; \; \text{and} \; \; \|\tilde\btheta-\btheta\|_1 \lesssim s\lambda_n \lesssim s \sqrt{\frac{\log d}{n}}.
\end{align*}
Since $\|\nabla L_{S_1}(\btheta_n)\|_\infty \leq \frac{\lambda_n}{2}$ occurs with probability $1-o(1)$ by Lemma~\ref{lemma:grad_conc}, these bounds in the above display hold with probability $1-o(1)$ as well, completing the proof of Proposition \ref{thm:first_step}.

Next, we state the two main Lemmas, and their proofs afterwards.

\begin{lemma}(Concentration of the gradient)\label{lemma:grad_conc}: 
Suppose the conditions of Proposition~\ref{thm:first_step} hold. There exists $C>0$ such that with $\lambda_n=C \sqrt{\log d/n}$,
\begin{align*}
\mathbb P\bigg(\|\nabla L_{S_1}(\boldsymbol\theta)\|_\infty>\frac{\lambda_n}{2}\bigg) = o(1),
\end{align*}
where the $o(1)$ term goes to $0$ as $n,d \to \infty$.
\end{lemma}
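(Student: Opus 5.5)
Differentiating \eqref{eq:define_LS1} at $\bb=\btheta$ and using $\frac{d}{dx}\log\cosh x=\tanh x = 2f(x)-1$, we get
\begin{align*}
\nabla L_{S_1}(\btheta) = -\frac{1}{|S_1|}\sum_{i\in S_1}\big(y_i-\tanh(v_i)\big)\bX_i = -\frac{2}{|S_1|}\sum_{i\in S_1}\varepsilon_i\bX_i ,
\end{align*}
with $\varepsilon_i=\overline y_i-f(v_i)$ as in \eqref{eq:def_eps_i}. The plan is to condition on $(\by_{-S},\bX)$. Because $S$ is a strong independent set, each $m_i(\by)$ for $i\in S_1$ depends only on $\by_{-S}$; it is therefore fixed under the conditioning. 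By \eqref{eq:y_bern}, the variables $\{\varepsilon_i\}_{i\in S_1}$ are conditionally independent, mean zero, and bounded by $1$ in absolute value.

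Fix a coordinate $k\in[d]$. Conditionally, $\sum_{i\in S_1}\varepsilon_i X_{ik}$ is a sum of independent, centered, bounded-times-constant terms. Hoeffding's inequality then gives
\begin{align*}
\mathbb P\Big(\Big|\frac{2}{|S_1|}\sum_{i\in S_1}\varepsilon_iX_{ik}\Big|>t\,\Big|\,\by_{-S},\bX\Big)\le 2\exp\Big(-\frac{|S_1|^2t^2}{8\sum_{i\in S_1}X_{ik}^2}\Big).
\end{align*}
A union bound over $k\in[d]$ controls the $\ell_\infty$ norm on the event $\mathcal E:=\{\max_{k}\frac{1}{|S_1|}\sum_{i\in S_1}X_{ik}^2\le K\}$. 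On $\mathcal E$, take $t=\lambda_n/2=\frac C2\sqrt{\log d/n}$ and use $|S_1|\ge c n$ from \eqref{eq:size_set}. The conditional probability is then at most $2d\exp(-c' C^2\log d/K)=2d^{1-c'C^2/K}$, which is $o(1)$ once $C$ is chosen large relative to $K$. Integrating over the conditioning yields the unconditional bound $\mathbb P(\mathcal E^c)+o(1)$.

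The remaining step, and the one I expect to need the most care, is showing $\mathbb P(\mathcal E^c)=o(1)$ for a fixed constant $K$. The point is that $S_1$ is chosen from the hypergraph alone, so $\{\bX_i\}_{i\in S_1}$ are i.i.d.\ sub-Gaussian regardless of $\by$. Each $X_{ik}^2$ is sub-exponential with mean $\Sigma_{kk}\le\kappa$ (Assumption~\ref{assn:covariates}). Bernstein's inequality \cite[Proposition 5.16]{vershynin2010introduction} combined with a union bound over $k$ gives $\max_k\big|\frac1{|S_1|}\sum_{i\in S_1}X_{ik}^2-\Sigma_{kk}\big|\lesssim\sqrt{\log d/n}+\log d/n$ with probability $1-d^{-c}$. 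This is $o(1)$ because $s\sqrt{\log d/n}=o(1)$, so $K=2\kappa$ suffices.

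The subtlety to check is the conditioning order. The covariates must be fixed before applying Hoeffding, yet the distribution of $\by_{-S}$ depends on $\bX$. This is harmless: the Hoeffding bound holds for every value of $(\by_{-S},\bX)$ in $\mathcal E$, and $\mathcal E$ depends on $\bX$ alone. No mixing or decorrelation properties of the MRF are needed. Combining the two bounds proves the lemma with $C$ a sufficiently large multiple of $\sqrt{\kappa}$.
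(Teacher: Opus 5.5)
Your proposal is correct and follows essentially the same route as the paper. Like the paper, you condition on $(\by_{-S},\bX)$ so that the summands $X_{ik}(y_i-\tanh(v_i))$, $i\in S_1$, are independent, centered and bounded. You then apply a Hoeffding-type tail bound with a union bound over $k\in[d]$ on a good event controlling $\max_k$ of the empirical second moments, and show via Bernstein's inequality and a union bound that this event has probability $1-o(1)$. The only differences from the paper are presentational: you restrict the good event to $S_1$ and spell out why fixing $\bX$ before applying Hoeffding is harmless.
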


\begin{lemma}(Restricted strong convexity of the pseudo-likelihood)\label{lemma:concavity}:
Suppose the assumptions of Theorem \ref{thm:first_step} hold. Then, there exist positive constants $\nu, c$ such that
\begin{align*}
L_{S_1}(\tilde\btheta) - L_{S_1}(\boldsymbol\theta) - \nabla L_{S_1}(\boldsymbol\theta)^\top (\tilde\btheta-\btheta) \geq \nu\|\boldsymbol\eta\|_2^2 - c\mathcal R_n \|\boldsymbol\eta\|_1^2,
\end{align*}
with probability at least $1-o(1)$, for all $\boldsymbol\eta \in \mathbb R^d$ such that $\|\boldsymbol\eta\|_2 \le 1$. Here, $\mathscr{R}_n$ is defined by~\eqref{eq:defn_rademacher}
\end{lemma}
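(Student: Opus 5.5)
The statement to prove is Lemma~\ref{lemma:concavity}, restricted strong convexity of $L_{S_1}$. I read it, with $\boldsymbol\eta=\tilde\btheta-\btheta$ or any direction, as the usual RSC condition for the first-order Taylor remainder $\mathcal{E}(\boldsymbol\eta):=L_{S_1}(\btheta+\boldsymbol\eta)-L_{S_1}(\btheta)-\nabla L_{S_1}(\btheta)^\top\boldsymbol\eta$.

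The key observation is that $L_{S_1}$ is a logistic-type loss in $\bb$ with offsets $m_i(\by)$. Since $\frac{d^2}{dx^2}\log\cosh x=\sech^2 x=f'(x)/2\cdot 2$, i.e.\ $\sech^2(x)=2f'(x)$, we have
\[
\mathcal{E}(\boldsymbol\eta)=\frac{1}{|S_1|}\sum_{i\in S_1}\int_0^1(1-t)\sech^2\big(v_i+t\bX_i^\top\boldsymbol\eta\big)\,dt\,(\bX_i^\top\boldsymbol\eta)^2 .
\]
It does not depend on $\by_{S_1}$ except through $v_i$. Moreover $|m_i(\by)|\le 1$ by Assumption~\ref{assn:graph}, regardless of $\by$. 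This is the crucial point: the dependence enters only through bounded offsets, so I can lower bound deterministically in $\by$.

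For a truncation level $T$, on the event $\{|\bX_i^\top\btheta|\le T,\ |\bX_i^\top\boldsymbol\eta|\le T\}$ the argument of $\sech^2$ is at most $1+2T$ in absolute value. The integrand is therefore at least $c_T:=\sech^2(1+2T)>0$. Hence
\[
\mathcal{E}(\boldsymbol\eta)\ge \frac{c_T}{2|S_1|}\sum_{i\in S_1}\varphi_T\big(\bX_i^\top\boldsymbol\eta\big)\,\mathbbm{1}\{|\bX_i^\top\btheta|\le T\},
\]
where $\varphi_T(u)=u^2\mathbbm{1}\{|u|\le T\}$. For a Lipschitz argument I replace $\varphi_T$ by a smooth minorant such as $u^2\wedge(2T-|u|)_+^2$, and replace the indicator by a Lipschitz cutoff. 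The right side now involves only the covariates $\{\bX_i\}_{i\in S_1}$, which are i.i.d. and independent of the graph. The problem thus reduces exactly to the standard RSC argument for GLMs with i.i.d.\ sub-Gaussian design, as in Negahban et al.\ and \cite[Theorem 9.36]{Wainwright_2019}.

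The steps are as follows.
\begin{enumerate}
\item[(a)] Bound the population version from below by $\nu\|\boldsymbol\eta\|_2^2$, for $\|\boldsymbol\eta\|_2=r\le1$, after choosing $T$ large. Sub-Gaussianity and Cauchy--Schwarz control the truncated mass, $\mathbb{E}[(\bX^\top\boldsymbol\eta)^2\mathbbm{1}\{\cdot\}]\lesssim r^2\mathbb{P}(\cdot)^{1/2}$, exactly as in the proof of~\eqref{eq:Gamma_pos_def}. Assumption~\ref{assn:covariates} gives $\mathbb{E}(\bX^\top\boldsymbol\eta)^2\ge r^2/\kappa$. Since the truncation of $\bX^\top\boldsymbol\eta$ is at scale $T$ while $r\le1$, this is fine; scaling the truncation with $r$ is the standard trick if needed.
\item[(b)] Show uniform concentration over $\{\|\boldsymbol\eta\|_2=r,\ \|\boldsymbol\eta\|_1\le \rho\}$. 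By bounded differences, the empirical process deviates from its mean by at most its expectation plus $O(r^2)$ times a small constant, with high probability. Symmetrization and the Ledoux--Talagrand contraction (the truncated function is Lipschitz with constant $O(T)$ on the relevant range) bound the expectation by $\lesssim rT\,\mathbb{E}\|\frac1{|S_1|}\sum\varepsilon_i\bX_i\|_\infty\,\rho\lesssim r\rho\mathscr{R}_n$, using $|S_1|=\Omega(n)$ from~\eqref{eq:size_set}.
\item[(c)] Peel over dyadic values of $r$ and of $\rho/r$ and take a union bound. This yields $\mathcal{E}(\boldsymbol\eta)\ge \nu\|\boldsymbol\eta\|_2^2-c\mathscr{R}_n\|\boldsymbol\eta\|_1\|\boldsymbol\eta\|_2$. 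The latter term is at most $c\mathscr{R}_n\|\boldsymbol\eta\|_1^2$, since $\|\boldsymbol\eta\|_2\le\|\boldsymbol\eta\|_1$, which gives the stated form.
\end{enumerate}

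The main obstacle is step (b)/(c), the uniform empirical-process bound with peeling. It must hold simultaneously for all $\boldsymbol\eta$, and the high-probability statement must survive the union bound. I would follow \cite[Theorem 9.36]{Wainwright_2019} nearly verbatim, checking only that the random offsets $m_i(\by)$ cause no trouble. They do not, because of the deterministic bound $|m_i|\le1$, which lets me discard $\by$ before any probabilistic step.
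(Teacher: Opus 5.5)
Your proposal is correct and follows the paper's route. The paper also writes the Taylor remainder with Hessian $\frac{1}{|S_1|}\sum_{i\in S_1}\bX_i\bX_i^\top\sech^2(\cdot)$ and uses the deterministic bound $|m_i(\by)|\le 1$ from Assumption~\ref{assn:graph} to reduce to a curvature bound depending only on the i.i.d.\ covariates, then simply cites \cite[Proposition 19]{mukherjee2024logistic} (transferred to $S_1$ via $|S_1|\ge cn$). You instead unpack that cited result through the truncation, symmetrization/contraction and peeling argument of \cite[Theorem 9.36]{Wainwright_2019}.

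Two cosmetic fixes are needed. First, $u^2\wedge(2T-|u|)_+^2$ is a minorant of $u^2\mathbbm{1}\{|u|\le 2T\}$, not of $\varphi_T$, so take the curvature constant on $|\bX_i^\top\boldsymbol\eta|\le 2T$. Second, truncating at scale $T\|\boldsymbol\eta\|_2$ from the start makes the class homogeneous, so you only need to peel over $\|\boldsymbol\eta\|_1/\|\boldsymbol\eta\|_2$, not over $r$.
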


Now we provide the proofs of the above two Lemmas.

\noindent \textbf{Proof of Lemma~\ref{lemma:grad_conc}:}

\noindent Consider the good set 
\begin{equation}\label{eq:defn_good_set}
    \mathcal{A}:= \left\{\max_{k \in [d]} \frac{1}{n} \sum_{i=1}^n X^2_{ik} \le 2 \lambda_{\max}(\Sigma) \right\}
\end{equation}
We will prove the Lemma under the assumption $\mathcal{A}$ holds. In the end of the proof, we will show that $\mathbb{P}(\mathcal{A})= 1- o(1)$. Throughout the proof we will use $c,C,C_i$'s to denote arbitrary positive constants.

We recall from~\eqref{eq:size_set} that $|S_1| \ge cn$ for some $c>0$ fixed. From the definition of $L_{S_1}$ in~\eqref{eq:define_LS1}, we obtain that
\begin{equation}\label{eq:ls1_gradient}
    (\nabla L_{S_1}(\btheta))_k= -\frac{1}{|S_1|} \sum_{i \in S_1} X_{ik}   \left[y_i - \tanh(v_i)\right],
\end{equation}
where $v_i$ is defined as in~\eqref{eq:define_vi}. Therefore, we need to show that
\begin{equation*}
    \mathcal{T}:= \max_{k \in [d]} \left| \frac{1}{|S_1|} \sum_{i\in S_1} X_{ik}   \left[y_i - \tanh(v_i)\right] \right| \le C \sqrt{\frac{\log d}{n}}
\end{equation*}
for some $C>0$ with high probability. To this end, note that each individual component $X_{ik}   \left[y_i - \tanh(v_i)\right]$ satisfies
\begin{equation*}
    \mathbb{E} \left[X_{ik}   \left[y_i - \tanh(v_i)\right]\right]=0.
\end{equation*}
Further, conditioned on $\by_{-S}$, $y_i$'s are independent for $i \in S_1$. Hence for each $k$, $(\nabla L_{S_1}(\btheta))_k$ consists of a weighted sum of independent sub-Gaussian random variables. Therefore
\begin{align*}
    \mathbb{P}\left( \mathcal{T} > C \sqrt{\frac{\log d}{n}} \right) & \le d \max_{k \in [d]} \mathbb{P} \left( \left| \frac{1}{|S_1|} \sum_{i\in S_1} X_{ik}   \left[y_i - \tanh(v_i)\right] \right| > C \sqrt{\frac{\log d}{n}} \right) \\
    & \le d \max_{k \in [d]} \exp\Bigg(- C_2 \frac{n \log d }{\sum_{i \in S_1} X^2_{ik}}\Bigg) \\
    & \le d \exp\Bigg(- C_3 \frac{\log d }{\max_{k \in [d]}\frac{1}{n}\sum_{i=1}^n X^2_{ik}}\Bigg) \rightarrow 0,
\end{align*}
for large $C$ under the event $\mathcal{A}$. This completes the proof once we show $\mathbb{P}(\mathcal{A})= 1-o(1)$.

The bound on $\mathbb{P}(\mathcal{A})$ follows by sub-exponential concentration bounds followed by a union bound. To see this, note that for any $k \in [d]$
\begin{align*}
    \frac{1}{n}\sum_{i=1}^{n} X^2_{ik}= \frac{1}{n}\sum_{i=1}^{n} \Big(X^2_{ik} -\mathbb{E}(X^2_{1k})\Big)+ \Sigma_{kk}
\end{align*}
Since $\bX_i$ are sub-Gaussian random vectors, $X^2_{ik}$, $i \in [n]$ are i.i.d. sub-exponential random variables with sub-exponential norm bounded by some $C>0$. Therefore, by standard sub-exponential concentration inequality~\cite[Corollary 5.17]{vershynin2010introduction}, we have 
\[\mathbb{P}\left(\left|\frac{1}{n}\sum_{i=1}^{n} \Big(X^2_{ik} -\mathbb{E}(X^2_{1k})\Big)\right| \ge \varepsilon \right)\le \exp \Bigg( -C \min\{\varepsilon,\varepsilon^2\}n \Bigg)\]
Choosing $\varepsilon= C \sqrt{\frac{\log d}{n}}$ and taking union bound over $k \in [d]$ we obtain that with probability $1-o(1)$, we have 
\begin{align*}
    \frac{1}{n}\sum_{i=1}^{n} X^2_{ik} \le C\sqrt{\frac{\log d}{n}}+ \max_{k \in [d]} \Sigma_{kk} \le 2 \lambda_{\max}(\Sigma),
\end{align*}
where the final inequality uses Assumption~\ref{assn:covariates} and by noting that for any $k$, $\bSigma_{kk} \le \lambda_{\max}(\bSigma)$. This completes the proof of Lemma~\ref{lemma:grad_conc}.

\noindent \textbf{Proof of Lemma~\ref{lemma:concavity}:}
Similar to the proof of Lemma~\ref{lemma:grad_conc}, we will assume that $|S_1| \ge c n$ for some $c>0$. Using the definition of $L_{S_1}$ given by~\eqref{eq:define_LS1}, we obtain that
\begin{equation}\label{eq:LSI_hessian}
    \nabla^2 L_{S_1}(\btheta)= \frac{1}{|S_1|} \sum_{i \in S_1} \bX_i \bX^\top_i  \sech^2 (v_i),
\end{equation}
where $v_i$ is defined as~\eqref{eq:define_vi}. Using a Taylor's expansion of $L_{S_1}$ around $\btheta$, we obtain that
\begin{align*}
L_{S_1}(\btheta+ \boldsymbol{\eta}) = L_{S_1}(\boldsymbol\theta) + \nabla L_{S_1}(\boldsymbol\theta)^\top \boldsymbol\eta + \frac{1}{2}\boldsymbol\eta^\top \nabla^2L_{S_1}(\boldsymbol\theta + t\boldsymbol\eta)\boldsymbol\eta,
\end{align*}
for some $t \in (0,1)$. Using the expression for $\nabla^2 L_{S_1}(\boldsymbol\theta)$ given by~\eqref{eq:LSI_hessian}, it follows that
\begin{align*}
     &L_{S_1}(\boldsymbol{\theta+\eta}) - L_{S_1}(\boldsymbol\theta) - \nabla L_{S_1}(\boldsymbol\theta)^\top\boldsymbol\eta \\
&= \underbrace{\frac{1}{2|S_1|}\sum_{i \in S_1} (\boldsymbol{\eta}^\top \bX_i)^2 \sech^2(  m_i(\by) +  \bX^\top_i(\btheta+ t\boldsymbol{\eta})))}_{=:\mathcal{T}_1} 
\end{align*}
The remainder of the proof is to show 
$\mathcal{T}_1$ satisfies the required lower bound given by Lemma~\ref{lemma:concavity}. 
To this end, note that
\begin{align}\label{eq:t2_i}
    \mathcal{T}_1 &= \frac{1}{2|S_1|}\sum_{i \in S_1} (\boldsymbol{\eta}^\top \bX_i)^2 \sech^2\big(  m_i(\by) +   \bX_i^\top(\boldsymbol\theta + t\boldsymbol\eta)\big) \nonumber\\
    &\geq \frac{1}{2|S_1|}\sum_{i \in S_1} (\boldsymbol{\eta}^\top \bX_i)^2  \sech^2\big(C_1+ C |\bX_i^\top(\boldsymbol\theta + t\boldsymbol\eta)|\big),
\end{align}
for some constants $C_1, C>0$. Here the inequality holds since $\sup_{i} m_i(\by)= O(1)$ by Assumption~\ref{assn:graph}. 
Therefore, we need to lower bound the final quantity of~\eqref{eq:t2_i}. Recall the definition of Rademacher complexity from~\eqref{eq:defn_rademacher}. By invoking~\cite[Proposition 19]{mukherjee2024logistic}, we immediately obtain that there exists $\nu, c_0>0$ such that 
\begin{align*}
   \mathcal{T}_1 \geq \nu\|\boldsymbol\eta\|_2^2 - c\mathscr{R}_n \|\boldsymbol\eta\|_1^2,
\end{align*}
with probability at least $1-o(1)$, for all $\boldsymbol\eta \in \mathbb R^d$ such that $\|\boldsymbol\eta\|_2 \le 1$. We note that the lower bound provided by~\cite[Proposition 19]{mukherjee2024logistic} is based on an average over all $i \in [n]$. However, the same proof works verbatim once we note that $|S_1| \ge cn$ by the bounded degree condition (Assumption~\ref{assn:graph}). This completes the proof the Lemma.

\subsection*{Proof of Proposition~\ref{prop:CI}}

To prove validity of our confidence interval, it is enough to show that \begin{align}\label{eq:hatv_consistency}
    \widehat V/V \xrightarrow{ \mathbb{P}}1,
\end{align} where $\widehat V$ and $V$ are defined by~\eqref{eq:estimated_var} and~\eqref{eq:define_vj} respectively. To this end, note that 
\begin{align*}
    \left|\frac{\widehat{V}}{V}-1\right|&= \frac{1}{V} \left|\frac{4}{|S_2|^2} \sum_{i \in S_2} (f(v_i)(1-f(v_i))- f(\tilde v_i)(1-f(\tilde v_i)))(\hat\bu^\top \bX_i)^2 \right|\\
    & \le \max_{i \in S_2}\left| \frac{f(v_i)(1-f(v_i))}{f(\tilde v_i)(1-f(\tilde v_i))} -1\right|
\end{align*}
We have to show the above random variable converges to $0$ in probability. Since $f(x)+ f(-x)=1$ by definition, it is enough to show
\begin{align}\label{eq:two_ineq}
    \max_{i \in S_2}\left| \frac{f(|v_i|)}{f(|\tilde v_i|)} -1\right| \xrightarrow{\mathbb{P}} 0, \qquad \max_{i \in S_2}\left| \frac{1-f(|v_i|)}{1-f(|\tilde v_i|)} -1\right| \xrightarrow{\mathbb{P}} 0
\end{align}
To show the first inequality above, recall that $f(x)= e^{x}/(e^x+ e^{-x})$ by~\eqref{eq:define_f}. Since $f$ is increasing in $\mathbb R$, we have $f(x) \ge 1/2$ for all $x \ge 0$. Further $f'(x)= 2e^{2x}/(e^{2x}+1)^2$, implying that $\|f'\|_{\infty} \le 2$. Therefore we have
\begin{align*}
     \max_{i \in S_2}\left| \frac{f(|v_i|)}{f(|\tilde v_i|)} -1\right| &= \max_{i \in S_2}\left| \frac{f(|v_i|)- f(|\tilde v_i|)}{f(|\tilde v_i|)}\right| \\
     &\le 2\|f'\|_{\infty}\max_{i \in S_2} |v_i - \tilde v_i|\\
     &\le 4 \max_{i \in S_2} |\bX^\top_i(\btheta- \tilde\btheta)| \\
     &\lesssim \sqrt{\log n} \|\btheta- \tilde\btheta\|_2 \\
     &\le \sqrt{\frac{s \log d \log n}{n}} = o(1),
\end{align*}
with high probability. Here the third inequality follows since the maximum of $n$ independent sub-Gaussian random variables is upper bounded by $C\sqrt{\log n}$ with high probability and the fourth inequality follows from Proposition~\ref{thm:first_step}.

To show the second inequality of~\eqref{eq:two_ineq}, note that $f(-|\tilde v|)= 1-f(|\tilde v_i|)$. Therefore we have
\begin{align*}
    \max_{i \in S_2}\left| \frac{1-f(|v_i|)}{1-f(|\tilde v_i|)} -1\right| &= \max_{i \in S_2}\left| \frac{f(|\tilde v_i|)-f(|v_i|)}{f(-|\tilde v_i|)} \right|\\
    &\lesssim \sqrt{\frac{s \log d \log n}{n}} \max_{i \in S_2} \frac{1}{f(-|\tilde v_i|)},
\end{align*}
where the inequality follows from the above argument. Hence, it is enough to bound $\max_{i \in S_2} \frac{1}{f(-|\tilde v_i|)}$. To this end note that,
\begin{align*}
    \max_{i \in S_2} \frac{1}{f(-|\tilde v_i|)}= \max_{i \in S_2} \frac{e^{-|\tilde v_i|}+e^{|\tilde v_i|}}{e^{-|\tilde v_i|}} \lesssim e^{C_1 +C_2 \max_{i \in S_2} |\bX^\top_i \tilde\btheta|},
\end{align*}
for some $C_1,C_2 >0$. Further, we have 
\begin{align*}
    \max_{i \in S_2} |\bX^\top_i \tilde\btheta| =  \max_{i \in S_2} |\bX_i^\top(\tilde\btheta-\btheta) + \bX_i^\top\btheta|\le o_{\mathbb{P}}(\sqrt{\log n})+ \max_{i \in S_2} |\bX^\top_i \btheta|= O_{\mathbb{P}}(\sqrt{\log n}),
\end{align*}
since $\|\btheta\|_2=O(1)$ by Assumption~\ref{assn:theta}. Therefore, for some $C>0$,
\begin{align*}
    \max_{i \in S_2} \frac{1}{f(-|\tilde v_i|)} \lesssim e^{ C \sqrt{\log n}}
\end{align*}
with high probability. Hence,
\begin{align*}
    \max_{i \in S_2}\left| \frac{1-f(|v_i|)}{1-f(|\tilde v_i|)} -1\right| \lesssim \sqrt{\frac{s \log d \log n}{n}} e^{ C \sqrt{\log n}} =o(1)
\end{align*}
with high probability using our sparsity assumption in Theorem~\ref{thm:main}. This shows that $\widehat{V}/V \xrightarrow{\mathbb{P}}1$. Hence, by Slutsky's theorem, we have the validity of the confidence interval. 

Next we prove the required upper bound on the length of the confidence interval. Recall the definition of $\mathcal{C}$ given by~\eqref{eq:define_conf_int}. The length of the confidence interval is exactly $2 z_{\alpha/2}\sqrt{\widehat{V}}$. We have shown above that $\widehat{V}/V \xrightarrow{\mathbb{P}}1$. Hence the length of the confidence interval is $(1+o_{P}(1))2 z_{\alpha/2}\sqrt{V}$. This completes the proof of the Proposition.

\end{document}